\newtheorem{theorem}{Theorem}[section]
\newtheorem{lemma}[theorem]{Lemma}
\newtheorem{corollary}[theorem]{Corollary}
\newtheorem{fact}[theorem]{Fact}
\newtheorem{remark}{Remark}[section]
\newtheorem{conjecture}[theorem]{Conjecture}
\newcommand{\pr}{\mathbb{P}}
\newcommand{\E}{{\mathbb E}}
\newcommand{\cP}{\mathcal{P}}
\newcommand{\supp}{\mathrm{supp}}
\newcommand{\remove}[1]{}
\newcommand{\1}{\mathbf{1}}
\newcommand{\e}{\varepsilon}
\newcommand{\eps}{\varepsilon}
\newcommand{\Id}{\mathrm{Id}}
\newcommand{\RR}{\mathbb{R}}
\newcommand{\grad}{\nabla}
\newcommand*{\defeq}{\mathrel{\vcenter{\baselineskip0.5ex \lineskiplimit0pt
                     \hbox{\scriptsize.}\hbox{\scriptsize.}}}%
                     =}
\def\P{\mathbb{P}}
\begin{document}

\title{{\bf Regularization under diffusion and \\ anti-concentration of the information content}}
\author{Ronen Eldan\footnote{Weizmann Institute of Science.} \and James R. Lee\footnote{University of Washington.}}
\date{}
\maketitle

\begin{abstract}
Under the Ornstein-Uhlenbeck semigroup $\{U_t\}$, any
non-negative measurable $f : \mathbb R^n \to \mathbb R_+$
exhibits a uniform tail bound better than that implied by Markov's inequality
and conservation of mass:
For every $\alpha \geq e^3$, and $t > 0$,
\[
   \gamma_n\left(\left\{x \in \mathbb R^n : U_t f(x) > \alpha \int f\,d\gamma_n\right\}\right) \leq C(t) \frac{1}{\alpha} \sqrt{\frac{\log \log \alpha}{\log \alpha}}\,,
\]
where $\gamma_n$ is the $n$-dimensional Gaussian measure and $C(t)$ is
a constant depending only on $t$.
This confirms positively the Gaussian limiting
case of Talagrand's convolution conjecture (1989).

This is shown to follow from a more general phenomenon.
Suppose that $f : \RR^n \to \RR_+$ is {\em semi-log-convex}
in the sense that for some $\beta > 0$, for all $x \in \RR^n$,
the eigenvalues of $\nabla^2 \log f(x)$ are at least $-\beta$.
Then $f$ satisfies a tail bound asymptotically better
than that implied by Markov's inequality.
\end{abstract}


\section{Introduction}

Let $n \geq 1$ and equip $\mathbb R^n$ with the standard Gaussian measure $\gamma_n$.
Consider a function $f : \mathbb R^n \to \mathbb R$ in $L^1(\gamma_n)$.
The Ornstein-Uhlenbeck semi-group $\{U_t : t \geq 0\}$ is defined by
\[
U_t f(x) = \E f\left(e^{-t} x + \sqrt{1-e^{-2t}} Z\right)\,,
\]
where $Z$ has law $\gamma_n$.
One expects that the action of such a diffusion process serves to smoothen $f$.
Indeed, Nelson's hypercontractivity theorem \cite{Nelson73} shows that $U_t$
is a contraction from $L^p(\gamma_n)$ to $L^q(\gamma_n)$ for $1 < p \leq q$
and $t \geq \frac12 \log \frac{q-1}{p-1}$.

The concept of hypercontractivity plays an important role in several mathematical fields.
For example, in quantum field theory hypercontractivity can
often be used to show that a Hamiltonian is essentially self-adjoint on its
domain, laying the foundation for various constructions (see, e.g., \cite{GRS75}).
We refer to the surveys \cite{DLS92,Gross06}.
In the study of partial differential equations, it is a key method in several approaches to establishing the
existence and uniqueness of smooth solutions to evolution equations \cite{B11}.
Hypercontractivity is also a basic tool in establishing superconcentration \cite{Chatterjee14}.

In the present work, we assert a regularizing effect of $U_t$ merely
assuming that $f \in L^1(\gamma_n)$.  An important special case
is when $f$ is simply the indicator of a measurable subset of $\mathbb R^n$.
Assume now that $f : \mathbb R^n \to \mathbb R_+$
is non-negative.  Certainly we have Markov's inequality:
For any $\alpha \geq 1$,
\[
\gamma_n\left(\vphantom{\bigoplus}\{x : f(x) \geq \alpha \|f\|_1 \}\right) \leq \frac{1}{\alpha}\,,
\]
where we use $\|f\|_1 = \int |f|\,d\gamma_n$.
Of course, this bound is easily seen to be tight for any $\alpha > 0$ by taking
$f=\1_S$ for a measurable subset $S \subseteq \mathbb R^n$
with $\gamma_n(S)=1/\alpha$.  The mass of $f=\1_S$ lies on a single level set.
A very natural question arises:
Can a smoothed version of $f$, i.e. $U_t f$ for some $t > 0$, have a non-negligible fraction
of its mass near a single large value?
Talagrand conjectured that this cannot be the case.\footnote{Talagrand actually made a stronger conjecture \cite{T89}
that a similar statement should hold in the discrete cube.  We refer the reader to Section \ref{sec:discrete}.
We attribute this weaker conjecture to him---with permission---to stress his role in predicting
the phenomenon.}

\begin{conjecture}\label{conj:main}
For every $t > 0$, there exists a function $\psi_t : \mathbb [1,\infty) \to [1,\infty)$ with $\lim_{\alpha \to \infty} \psi_t(\alpha) = \infty$ such
that for any measurable $f : \mathbb R^n \to \mathbb R_+$ and any $\alpha > 1$,
\begin{equation}\label{eq:conj-main}
\gamma_n\left(\vphantom{\bigoplus}\{x : U_t f(x) > \alpha \|f\|_1 \}\right) \leq \frac{1}{\alpha\, \psi_t(\alpha)}\,.
\end{equation}
\end{conjecture}

One should recall here that $U_t$ preserves both positivity and the mean value; for non-negative $f$, we have $\|U_t f\|_1=\|f\|_1$.
The conjecture posits a uniform bound on the tail of the smoothed function.  Talagrand
notes that the best rate of decay one can expect is $\psi_t(\alpha) = c(t) \sqrt{\log \alpha}$
where $c(t)$ is some function depending only on $t$.  We resolve the conjecture
positively (Corollary \ref{cor:main} below) and achieve the bound \[\psi_t(\alpha) = c(t) \sqrt{\frac{\log \alpha}{\log \log \alpha}} \,.\]

Ball, Barthe, Bednorz, Oleszkiewicz, and Wolff \cite{BBBOW13} prove
that Conjecture \ref{conj:main}
holds in any fixed dimension; they achieve $\psi_t(\alpha) = C(t,n) \sqrt{\log \alpha}/(\log \log \alpha)$ where
$C(t,n)$ is a constant depending (exponentially) on the dimension $n$.

\medskip
\noindent
{\bf Anti-concentration of the information content.}
Fix a reference measure $\mu$ on $\RR^n$, and let $X$ be a random vector whose law has density $f d \mu$,
for a non-negative measurable function satisfying $\int f d \mu = 1$.
Following \cite{BM11}, consider the random variable
\begin{equation}\label{eq:info-content}
h(X) \coloneqq \log f(X).
\end{equation}
This quantity is referred to as the {\em information content} of the vector $X$.
In \cite{BM11} it is shown that when the reference measure $\mu$ is the Lebesgue measure and the random vector $X$ is log-concave, the information content is concentrated around its mean, which is the {\em relative entropy of $f$ with respect to $\mu$:}
\[
   H_{\mu}(f) \coloneqq \int f \log f\,d\mu = \E[\log f(X)] = \E[h(X)]\,.
\]

Our goal is to prove that a certain class of densities satisfies a tail bound
stronger than that implied by Markov's inequality.
While such a tail bound is a weak assertion about concentration,
we stress now that the desired improvement over Markov's inequality
is equivalent to establishing a form of {\em anti-concentration}
for the information content.
Indeed, our
verification of Conjecture~\ref{conj:main} will proceed in this manner.

We will show that for every $t > 0$,
the information contents of densities $\{U_t f\}$
are uniformly anti-concentrated in the following sense.
For every $t > 0$, there is a constant $C(t)$ such that for all $y$ sufficiently large
\begin{equation}\label{eq:anti-con}
   \P\left(\vphantom{\bigoplus}\!\left|h(X)-y\right| \leq 1\right) \leq C(t) \sqrt{\frac{\log y}{y}}\,.
\end{equation}
where $X$ is the random vector with law $U_t f d\gamma_n$ and $h(X)=\log U_t f(X)$.
Indeed, this achieves our goal:
If $G$ has law $\gamma_n$, then
\[
   \P\left(U_t f(G) \in [e^j \alpha,e^{j+1} \alpha]\right) \leq \frac{e^{-j}}{\alpha} \P\left(U_t f(X) \in [e^j \alpha,e^{j+1} \alpha]\right) = \frac{e^{-j}}{\alpha} \P\left(|h(X)-(\alpha+j)|\leq 1\right)\,.
\]
Now employing \eqref{eq:anti-con} and
summing over $j \geq 0$ gives \eqref{eq:conj-main} with $\psi_t(\alpha) \asymp \sqrt{\frac{\log \log \alpha}{\log \alpha}}$.

Consider an illustration:  Suppose a runner at time $t=0$ attempts to stop at distance about $y$ from her
starting point at time $t=1$.  Naturally, hitting the target exactly will be difficult,
and as $y$ increases, her accuracy will diminish as her average speed must increase.
One can think of various densities $f$ as stratgies for the runner,
and \eqref{eq:anti-con} asserts a uniform bound on the difficulty of hitting a distant target accurately.

Indeed, the proof proceeds along these lines:  We associate to $h(X)$ a stochastic process $\{Z_t\}$ such that
$Z_0=0$ and $Z_1$ has the law of $h(X)$.  If $Z_1 > y$ for some large $y$, we argue that the process must have
significant ``kinetic energy'' at most times $t \in [0,1]$ (see \eqref{eq:optimality}).  Small pushes at those energetic times do not
change the measure of $\{Z_t\}$ much, but they have a substantial effect on the outcome $Z_1$.
This implies that it is
impossible for $h(X)$ to concentrate near a specific value $y$,
and this difficulty increases with $y$.

\begin{remark}
Observe that \eqref{eq:anti-con} is not written in a more standard form (in terms of the L\'evy
concentration function) only because we did not normalize by the ``kinetic energy.''
If we instead define $\hat{h}(X) \coloneqq \left(\frac{\log U_t f(X)}{\log \log U_t f(X)}\right)_+^{1/2}$,
then Lemma \ref{lem:mass} implies that for every $\e > 0$,
\[
   \P\left(\left|\hat{h}(X)-y\right| < \e\right) \leq C \e + e^{-c y^2} \qquad \forall y \geq 0\,,
\]
where $C=C(t) \geq 1, c=c(t) > 0$ are numbers depending only on $t$.
\end{remark}



\remove{
\medskip
\noindent
{\bf Many shades of grey.}
There is an appealing intuitive picture behind Conjecture \ref{conj:main}.
If we think of $f : \mathbb R^n \to \mathbb R_+$ as giving greyscale values to ``pixels'' in $\mathbb R^n$
(with larger values closer to white and smaller values closer to black),
then the conjecture asserts that the blurred image can never have most of its ``grey mass'' concentrated
on a small number of very light shades of grey.
More formally, we will see that for $t > 0$,
\[
\int (U_t f) \1_{\{U_t f \in [\alpha,2 \alpha]\}}\,d \gamma_n \leq c(t) \frac{(\log \log \alpha)^{3/2}}{\sqrt{\log \alpha}} \|f\|_1\,.
\]
}

\medskip
\noindent
{\bf An dual perspective: The geometry of small sets.}
A dual point of view is helpful in understanding the geometric
content of Conjecture \ref{conj:main}. Fix $t > 0$,
let $S \subseteq \mathbb R^n$ be an open subset, and consider the set of
non-negative functions
$g : \mathbb R^n \to \mathbb R_+$ supported on $S$, and such that $\|U_t g\|_{\infty} \leq 1$.
Suppose we wish to maximize $\int g\,d\gamma_n$ subject to these constraints.

Clearly the choice $g=\1_S$ has $\int g\,d\gamma_n = \gamma_n(S)$.
Conjecture \ref{conj:main} asserts that there should be a strategy that does asymptotically better:
As $\gamma_n(S) \to 0$, it should be possible to achieve \[\gamma_n(S)^{-1} \int g\,d\gamma_n \to \infty\,.\]

In fact, the largest function $\psi_t$ achievable
in Conjecture \ref{conj:main} is precisely the same as the largest function $\psi_t$ such that
the following holds for every open $S \subseteq \mathbb R^n$:
\begin{equation}\label{eq:dual}
   \sup_{\substack{g : \RR^n \to \RR_+ \\ \supp(g) \subseteq S}}
      \left\{\int g \,d\gamma_n : \|U_t g\|_{\infty} \leq 1 \right\}
   \geq \psi_t\left(1/\gamma_n(S)\right)\gamma_n(S)\,.
\end{equation}
This dual characterization is a straightforward consequence of Hahn-Banach and self-adjointness of $U_t$ as an operator on $L^2(\gamma_n)$.
This optimization problem has a certain isoperimetric flavor
because it is intuitive
is that to make $\int g\,d\gamma_n$ significantly larger subject to the constraint $\|U_t g\|_{\infty} \leq 1$,
one should take advantage of the smoothing effects of $U_t$ near the boundary of $S$.

To make this slightly more concrete, consider the case $n=1$.
One can prove Conjecture \ref{conj:main} for $n=1$
via the dual formulation \eqref{eq:dual} as follows:
Given $S \subseteq \mathbb R$, one should choose $g$ to be a Dirac mass near the point
of $\mathbb R \setminus S$ which is closest to the origin.
(Strictly speaking, one should take a sequence of points in $S$ and a sequence of functions
approximating Dirac masses at those points.)  From the value $\gamma_n(S)$, one can conclude that
$S$ contains a point sufficiently close to the origin.
 A simple calculation with the Gaussian density yields the
 desired bound.\footnote{Completion of this sketch is Exercise 11.31 in O'Donnell's book \cite{OD14}.}

 \subsection{Semi-log-convexity and anti-concentration of the information content}

The resolution of Conjecture \ref{conj:main} arises from a more general
phenomenon.  Say that a function $f : \mathbb R^n \to \RR_+$ is {\em $\beta$-semi-log-convex}
if the function $x \mapsto \log f(x) + \frac{\beta}{2} \|x\|^2$ is convex.
Our main theorem asserts that for every $\beta > 0$,
the family of $\beta$-semi-log-convex densities (with respect to $\gamma_n$)
is uniformly sub-Markovian.

\begin{theorem}\label{thm:main}
Suppose that the non-negative measureable function
$f : \mathbb R^n \to \RR_+$ is $\beta$-semi-log-convex for some $\beta \geq 1$.
Then for all $\alpha \geq e^3$,
\begin{equation}\label{eq:semi}
\gamma_n\left(\{ x \in \mathbb R^n : f(x) > \alpha \|f\|_1\}\vphantom{\bigoplus}\right) \leq \frac{1}{\alpha} \cdot C \sqrt{ \frac{\beta \log \log \alpha}{\log \alpha} }\,,
\end{equation}
where $C > 0$ is a universal constant.
\end{theorem}

Note that in proving Theorem~\ref{thm:main}, we may assume (by approximation)
that $f$ has continuous second-order partial derivatives,
and then $\beta$-semi-log-convexity implies that
\begin{equation}\label{eq:hess}
\nabla^2 \log f(x) \succeq -\beta \,\mathrm{Id} \qquad \forall x \in \RR^n\,.
\end{equation}

We first explain how this resolves Conjecture \ref{conj:main} before moving on to
a discussion of Theorem \ref{thm:main}.
Let $\{B_t\}$ be an $n$-dimensional Brownian motion with $B_0=0$, and let $\cP_t f(x) = \E[f(x+B_t)]$
denote the corresponding semigroup.
A proof of the following standard fact is contained in the appendix.

\begin{lemma}\label{lem:hessian}
For any $f : \mathbb R^n \to \mathbb R_+$ in $L^1(\gamma_n)$ and $t > 0$, one has $\nabla^2 \log \cP_t f(x) \succeq -\frac{1}{t} \Id$
for all $x \in \mathbb R^n$.
\end{lemma}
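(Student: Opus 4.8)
The plan is to factor the heat kernel defining $P_t$ into a fixed Gaussian piece times a mixture of log-affine functions, and then to exploit that such mixtures are log-convex. Concretely, writing
\[
P_t f(x) = \frac{1}{(2\pi t)^{n/2}} \int_{\RR^n} f(y)\, e^{-|x-y|^2/(2t)}\,dy
\]
and expanding $|x-y|^2 = |x|^2 - 2\langle x,y\rangle + |y|^2$ in the exponent, I would write $P_t f(x) = e^{-|x|^2/(2t)}\, g(x)$, where
\[
g(x) \defeq \frac{1}{(2\pi t)^{n/2}} \int_{\RR^n} e^{\langle x,y\rangle/t}\, d\mu(y), \qquad d\mu(y) \defeq f(y)\, e^{-|y|^2/(2t)}\,dy\,.
\]
Since $f \geq 0$, the measure $\mu$ is non-negative, so $g$ is an average over $y$ of the functions $x \mapsto e^{\langle x,y\rangle/t}$, each of which is log-affine (hence log-convex) in $x$.

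Next I would check that $\log g$ is convex. By H\"older's inequality, for $x_0,x_1 \in \RR^n$ and $\lambda \in [0,1]$,
\[
g\big(\lambda x_0 + (1-\lambda)x_1\big) = \frac{1}{(2\pi t)^{n/2}} \int \big(e^{\langle x_0,y\rangle/t}\big)^{\lambda}\big(e^{\langle x_1,y\rangle/t}\big)^{1-\lambda}\, d\mu(y) \;\leq\; g(x_0)^{\lambda}\, g(x_1)^{1-\lambda}\,,
\]
so $\nabla^2 \log g(x) \succeq 0$ wherever $g$ is finite; equivalently, a direct differentiation identifies $\nabla^2 \log g(x) = \tfrac{1}{t^2}\,\mathrm{Cov}_{\mu_x}(y)$, the covariance matrix of $y$ under the tilted probability measure $d\mu_x(y) \propto e^{\langle x,y\rangle/t}\,d\mu(y)$, which is positive semidefinite. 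Taking logarithms in $P_t f(x) = e^{-|x|^2/(2t)} g(x)$ gives
\[
\log P_t f(x) = -\frac{|x|^2}{2t} + \log g(x)\,,
\]
and hence $\nabla^2 \log P_t f(x) = -\tfrac1t \Id + \nabla^2 \log g(x) \succeq -\tfrac1t\,\Id$, which is the claim.

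The only point requiring care is regularity: one must know that $P_t f$ (equivalently $g$) is finite, positive, and $C^2$ on the region where the bound is applied, and that differentiation under the integral sign is legitimate. The set $\{x : g(x) < \infty\}$ is convex (again by H\"older), and on its interior $g$ is finite, smooth, and strictly positive, with its derivatives obtained by differentiating $e^{\langle x,y\rangle/t}$ under the integral---these are standard facts about integrals of exponential-family type, following from H\"older's inequality and dominated convergence, using $f \in L^1(\gamma_n)$. Moreover, in the regime relevant to Conjecture \ref{conj:main}---where $U_s f(x) = P_{1-e^{-2s}} f(e^{-s}x)$, so the effective time $t = 1-e^{-2s}$ is always strictly below $1$---the exponent $\tfrac{|y|^2}{2} - \tfrac{|x-y|^2}{2t}$ tends to $-\infty$ as $|y| \to \infty$, which forces $P_t f$ to be finite on all of $\RR^n$; thus these are genuine technicalities. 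I do not anticipate any real obstacle here: the entire content of the lemma is the one-line log-convexity observation above, and the appendix should consist mostly of the bookkeeping just described.
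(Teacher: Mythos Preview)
Your proposal is correct and follows essentially the same route as the paper's appendix proof: both factor $P_t f(x) = e^{-|x|^2/(2t)} g(x)$ and then observe that $g$, being a mixture of log-affine (hence log-convex) functions, is itself log-convex. The paper simply cites the fact that mixtures of log-convex functions are log-convex, whereas you spell out the H\"older argument and add some regularity bookkeeping; but the underlying idea is identical.
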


This rather immediately implies the following.

\begin{corollary}\label{cor:main}
There is a constant $C > 0$ such that
for every $\rho \in (0,1)$,
the following holds.
For every measurable $g : \mathbb R^n \to \mathbb R_+$
and every $\alpha \geq e^3$, one has
\begin{equation}\label{eq:conj}
\P\left(\cP_{1-\rho} g(B_{\rho}) > \alpha \|g\|_1\right) \leq \frac{1}{\alpha} \cdot C \sqrt{\frac{\rho}{1-\rho} \frac{\log \log \alpha}{\log \alpha} }\,.
\end{equation}
\end{corollary}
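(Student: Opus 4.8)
The plan is to derive Corollary~\ref{cor:main} directly from Theorem~\ref{thm:main} and Lemma~\ref{lem:hessian} by a change of variables that turns the law of $B_\rho$ into the standard Gaussian $\gamma_n$. First I would dispose of the degenerate cases: if $\|g\|_1 = \infty$ or $\|g\|_1 = 0$, then the event $\{P_{1-\rho} g(B_\rho) > \alpha\|g\|_1\}$ has probability zero and \eqref{eq:conj} is trivial; otherwise, since both sides of \eqref{eq:conj} are invariant under $g \mapsto cg$, I may assume $\|g\|_1 = 1$. Because $g \in L^1(\gamma_n)$ and $1-\rho < 1$, the Gaussian kernel defining $P_{1-\rho}g$ is dominated pointwise by the Gaussian weight $e^{-|y|^2/2}$ for each fixed argument, so $P_{1-\rho}g$ is finite everywhere, lies in $C^\infty(\mathbb R^n)$, and is strictly positive. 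Since $B_\rho$ has law $\sqrt{\rho}\,Z$ for $Z \sim \gamma_n$, setting $h \defeq P_{1-\rho}g(\sqrt{\rho}\,\cdot\,)$ gives
\[
\P\!\left(P_{1-\rho} g(B_\rho) > \alpha\|g\|_1\right) = \gamma_n\!\left(\{x : h(x) > \alpha\}\right),
\]
and $h$ again has continuous second-order partial derivatives and is positive.

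Next I would verify the two quantities entering Theorem~\ref{thm:main} for $h$. For the normalization, Fubini's theorem gives $\|h\|_1 = \int P_{1-\rho}g(\sqrt\rho\,x)\,d\gamma_n(x) = \E\, g\!\left(\sqrt\rho\,Z + \sqrt{1-\rho}\,Z'\right)$ for independent $Z, Z' \sim \gamma_n$, and since $\sqrt\rho\,Z + \sqrt{1-\rho}\,Z' \sim \gamma_n$ this equals $\|g\|_1 = 1$. For the Hessian condition, Lemma~\ref{lem:hessian} with $t = 1-\rho$ gives $\nabla^2 \log P_{1-\rho}g(y) \succeq -\frac{1}{1-\rho}\,\Id$ for all $y$, so the chain rule yields $\nabla^2 \log h(x) = \rho\,(\nabla^2 \log P_{1-\rho}g)(\sqrt\rho\,x) \succeq -\frac{\rho}{1-\rho}\,\Id$. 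Applying Theorem~\ref{thm:main} to $h$ with $\beta = \frac{\rho}{1-\rho}$ then gives, for every $\alpha \geq e^3$,
\[
\gamma_n\!\left(\{x : h(x) > \alpha\}\right) \leq \frac{1}{\alpha}\cdot\frac{C\,\frac{\rho}{1-\rho}\,(\log\log\alpha)^{4}}{\sqrt{\log\alpha}},
\]
which is exactly \eqref{eq:conj}.

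The one point that requires care — and which I expect to be the main (if minor) obstacle — is that when $\rho < \tfrac12$ one has $\beta = \frac{\rho}{1-\rho} < 1$, which falls outside the range $\beta \geq 1$ appearing in the statement of Theorem~\ref{thm:main}. This cannot be handled by merely applying Theorem~\ref{thm:main} with $\beta = 1$: that would weaken the bound to $\frac{1}{\alpha}\cdot\frac{C(\log\log\alpha)^{4}}{\sqrt{\log\alpha}}$ and discard the factor $\frac{\rho}{1-\rho}$ that the corollary asserts. Instead one should note that the hypothesis $\nabla^2 \log f \succeq -\beta\,\Id$ becomes only more restrictive as $\beta$ decreases, so the estimate \eqref{eq:semi} continues to hold for all $\beta > 0$ with no change to the proof; granting this, the argument above is complete for every $\rho \in (0,1)$. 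Everything else — the change of variables, the $L^1$-identity, and the Hessian computation — is routine bookkeeping once one recalls the scaling relation $B_\rho \overset{d}{=} \sqrt{\rho}\,Z$ and Lemma~\ref{lem:hessian}.
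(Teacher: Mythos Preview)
Your proof is essentially identical to the paper's: your function $h(x)=P_{1-\rho}g(\sqrt{\rho}\,x)$ is exactly the paper's $P_{(1-\rho)/\rho}f$ for $f(x)=g(\sqrt{\rho}\,x)$, and both arguments then invoke Lemma~\ref{lem:hessian} and Theorem~\ref{thm:main} with $\beta=\rho/(1-\rho)$. You are in fact more careful than the paper in flagging the case $\rho<\tfrac12$ where $\beta<1$ falls outside the stated hypothesis of Theorem~\ref{thm:main}; the paper's two-line proof passes over this point silently.
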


\begin{proof}
If we define $f(x)=g(\sqrt{\rho} x)$, then $\cP_{1-\rho} g(B_{\rho})$ and $\cP_{(1-\rho)/\rho} f(Z)$ have the same law,
where $Z$ is a standard $n$-dimensional Gaussian.
Now combining Lemma \ref{lem:hessian} and Theorem \ref{thm:main} yields the desired result.
\end{proof}

Corollary \ref{cor:main} yields a resolution to Conjecture \ref{conj:main} by noting that for any $t > 0$,
\[
\gamma_n\left(\vphantom{\bigoplus}\{x : U_t f(x) >   \alpha \|f\|_1 \}\right) = \P\left(\cP_{1-e^{-2t}} f(B_{e^{-2t}}) > \alpha \|f\|_1\vphantom{\bigoplus}\right)\,.
\]

\medskip
\noindent
{\bf Translating the anti-concentration of Brownian motion.}
Despite the fact that Theorem \ref{thm:main} is not a stochastic statement,
the main theme of our paper is that the variance of Brownian motion
can be translated into anti-concentration estimates
for the information content of semi-log-convex densities on Gaussian space.

Let $f : \mathbb R^n \to \mathbb R_+$ be as in the statement of Theorem \ref{thm:main},
and let us assume that $\int f\,d\gamma_n = 1$.
Our goal \eqref{eq:semi} is equivalent to bounding $\P(f(B_1) > \alpha)$
and to this end,
it will suffice to give an upper bound on $\P(f(B_1) \in [\alpha',2\alpha'])$ for every $\alpha' > \alpha$
(this implies a bound on $\P(f(B_1) > \alpha)$ by a simple dyadic summation).

Very roughly, this will be achieved as follows.  We show that the function $t \to \E[f(B_1) \1_{\{f(B_1) \geq t \}}]$ does not decrease much when $t$ varies between $\alpha'$ and $2 \alpha'$ by introducing a coupling which associates the corresponding level sets for different values of $t$.
This ``transfer of mass'' between levels is achieved by adaptively perturbing an
underlying It\^o process.
The Hessian condition \eqref{eq:hess} ensures that $f$ behaves predictably under small perturbations.
The primary difficulty is to perform the perturbations without changing
the measure of the underlying Brownian motion too much.
For this purpose, Girsanov's change
of measure theorem will play an essential role.

\medskip
\noindent
{\bf Related work.}
Our use of random measures and stochastic calculus to study the geometry of Gaussian space
is certainly closely related to the works \cite{Eldan13,EldanArxiv}.  On the other hand, the idea to
study functionals using an ``optimal'' adapted coupling to Brownian motion (see Section \ref{sec:drift})
comes from the viewpoint of stochastic control theory \cite{Follmer85,Lehec13} and its geometric applications \cite{Lehec13}.
Other variational perspectives appear in the work \cite{BD98} and in Borell's papers \cite{Borell00,Borell02}
where one of his primary goals is their use in proving functional inequalities.
An important distinction between our work and some previous ones involves our use of second-order methods.
Specifically, we study the effect of perturbations on the optimal drift.

Finally, we should mention two vast bodies of work closely related to our study:
Markov diffusions and semigroup methods (see, e.g., \cite{BGLbook}),
as well as the the theory of optimal transportation.  For the latter topic,
one might consult \cite[Ch. 9]{Villani03book} for an excellent review of the
literature
related to functional inequalities.

\subsection{Talagrand's conjecture for the discrete cube}
\label{sec:discrete}

Talagrand \cite{T89} posed the following conjecture which is a generalization of Conjecture \ref{conj:main}.
Let $n \geq 1$ and $t \geq 0$ be given.  Consider the probability measure on the set $\{-1,1\}$ given by
\[
\mu_t \defeq \frac{1-e^{-t}}{2} \delta_{-1} + \frac{1+e^{-t}}{2} \delta_1\,.
\]
Denote $\mu_t^n$ the corresponding product measure on $\{-1,1\}^n$,
and put $\mu = \mu_{\infty}$ so that $\mu^n$ is the uniform measure on $\{-1,1\}^n$.
Let $L^2(\mu^n) = L^2(\{-1,1\}^n,\mu^n)$ denote the Hilbert space of real-valued functions
$f : \{-1,1\}^n \to \mathbb R$.

Consider the operator $T_t : L^2(\mu^n) \to L^2(\mu^n)$ given by convolution
with an $e^{-t}$-biased measure, i.e.
\[
T_t f = f * \mu_t^{n}\,,
\]
where one uses the natural multiplicative group structure on $\{-1,1\}^n$.

As in the Gaussian case, this operator admits a hypercontractive estimate \cite{Bonami71,Beckner75,Gross75}:
$T_t$ is a contraction from $L^p(\mu^n)$ to $L^q(\mu^n)$ for $1 < p \leq q$
and $t \geq \frac12 \log \frac{q-1}{p-1}$.

\begin{conjecture}[\cite{T89}]\label{conj:cube}
For every $t > 0$, there exists a function $\varphi_t : \mathbb [1,\infty) \to [1,\infty)$ with $\lim_{\alpha \to \infty} \varphi_t(\alpha) = \infty$ such
that for every $f : \{-1,1\}^n \to \mathbb R_+$ and any $\alpha > 1$,
\[
\mu^n\left(\vphantom{\bigoplus}\{x \in \{-1,1\}^n : T_t f(x) > \alpha \|f\|_1 \}\right) \leq \frac{1}{\alpha\, \varphi_t(\alpha)}\,.
\]
\end{conjecture}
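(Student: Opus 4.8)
\medskip
\noindent
\textbf{A strategy for Conjecture \ref{conj:cube}.}
The plan is to deduce the Boolean statement from the Gaussian one already in hand --- the Gaussian case of the conjecture (Corollary \ref{cor:main}), and behind it Theorem \ref{thm:main} and Lemma \ref{lem:hessian} --- by an invariance principle. Normalizing $\|f\|_1=1$, it suffices to bound $\mu^n(\{T_tf>\alpha\})$. I would factor $T_t=T_{t/2}\circ T_{t/2}$ and set $g=T_{t/2}f\geq 0$, so that $\|g\|_1=1$ and $T_tf=T_{t/2}g$. First truncate $f$ at a level $M=M(f,\alpha)$ large enough that $\int f\,\mathbf{1}_{\{f>M\}}\,d\mu^n\leq\tfrac{1}{10}\varphi_t(\alpha)^{-1}$; the discarded part contributes at most $\tfrac{1}{10\alpha}\varphi_t(\alpha)^{-1}$ to the measure, by Markov's inequality applied to $T_t$. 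For the truncated function $\|g\|_2\leq M$, and Bonami--Beckner hypercontractivity controls the high Fourier levels: $\|T_{t/4}g^{>d}\|_q\leq e^{-td/4}\|g\|_2$ for $q=e^{t/2}+1>1$, so $\mu^n(\{T_{t/2}g^{>d}>\alpha/2\})\leq(2e^{-td/4}M/\alpha)^q$, which is negligible once $d$ is chosen of order $\tfrac{1}{t}(\log M+\log\alpha)$. Thus I may assume $g$ has Fourier degree at most $d$.

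\medskip
\noindent
\textbf{The invariance step.}
Next I would split the coordinates as $[n]=H\cup\bar H$, where $H=\{i:\mathrm{Inf}_i(g)\geq\tau_0\}$ for a small constant $\tau_0$; since $g$ has degree $\leq d$ and $\|g\|_2\leq M$ its total influence is at most $dM^2$, so $|H|\leq dM^2/\tau_0$. Separating the noise on $H$, one can write $T_{t/2}g(x_H,x_{\bar H})$ as a convex combination, over fixings $w_H$, of the functions $(T_{t/2}^{\bar H}g_{w_H})(x_{\bar H})$, where $g_{w_H}(\cdot)=g(w_H,\cdot)$ now has all influences $<\tau_0$. The multidimensional invariance principle of Mossel--O'Donnell--Oleszkiewicz then says that, up to a small error governed by $\tau_0$ and $d$, each low-degree object $T_{t/2}^{\bar H}g_{w_H}$ is the restriction to $\{-1,1\}^{\bar H}$ of $U_{t/2}\widetilde g_{w_H}$ for a nonnegative function $\widetilde g_{w_H}$ on Gaussian space with the same $L^1$ norm. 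Corollary \ref{cor:main} applies to $U_{t/2}\widetilde g_{w_H}$ and gives $\gamma_n(\{U_{t/2}\widetilde g_{w_H}>\alpha\})\leq \tfrac{\|g_{w_H}\|_1}{\alpha}\cdot\tfrac{C(t)(\log\log\alpha)^4}{\sqrt{\log\alpha}}$. Averaging over $w_H$, using $\mathbb E_{w_H}\|g_{w_H}\|_1=1$ and discarding the invariance errors, would yield the bound of Conjecture \ref{conj:cube} with $\varphi_t(\alpha)$ essentially $\sqrt{\log\alpha}/(\log\log\alpha)^4$ up to a $t$-dependent factor --- \emph{provided} the number of fixings $2^{|H|}$ that enters the passage from $T_{t/2}g$ to the family $\{T_{t/2}^{\bar H}g_{w_H}\}$ is under control, i.e. provided $|H|$ is bounded.

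\medskip
\noindent
\textbf{Where it gets hard.}
That proviso is where I expect the real difficulty to live. $|H|$ is bounded only when $\|g\|_2$, hence the truncation level $M$, hence the local concentration of $f$, is bounded --- but $f$ may be arbitrarily concentrated, e.g. $f=\alpha'\mathbf{1}_S$ with $\mu^n(S)\ll 1/\alpha$, where no truncation helps, $\|g\|_2$ is large, and $|H|$ can be of order $\log\alpha$, so that the $2^{|H|}$ factor overwhelms the gain (and a less wasteful argument than the naive union bound over fixings of $H$ would be required even to hope otherwise). For such highly concentrated $f$ I would argue instead by hypercontractivity alone: $T_tf>\alpha\|f\|_1$ forces $T_{t'}\mathbf{1}_S$, for the relevant set and coordinate restriction, to exceed an anomalously large multiple of $\mu^n(S)$, and Bonami--Beckner bounds $\|T_{t'}\mathbf{1}_S\|_q$ by $\mu^n(S)^{1/q'}$, giving a tail of shape $(\alpha^2\mu^n(S))^{-q/2}$; one then splits on whether $\mu^n(S)$ is comparable to $1/\alpha$ (a genuinely small set sitting at a single level, where this is already of the right form) or much smaller (where $T_t$ spreads the mass so thinly that the super-level set is exponentially small). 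The genuine content of a complete proof is to interpolate cleanly between this ``concentration'' regime and the ``invariance'' regime above, and in particular to handle $f$ that is simultaneously moderately concentrated and has a handful of influential coordinates, where neither estimate by itself is enough; this is presumably why the present paper resolves only the Gaussian Conjecture \ref{conj:main}.
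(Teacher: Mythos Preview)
The statement you are attempting is Conjecture~\ref{conj:cube}, which the paper explicitly leaves open: ``At present, Conjecture~\ref{conj:cube} is open for any value of $t>0$.'' There is no proof in the paper to compare against; Section~\ref{sec:discussion} only records partial observations and obstacles. Your write-up is, to its credit, an honest strategy with a named gap rather than a claimed proof, and your final paragraph correctly locates the obstruction: the truncation level $M$ and hence the degree $d$ must depend on how concentrated $f$ is, so the influence bound $|H|\leq dM^2/\tau_0$ is vacuous for the hard instances. Your fallback hypercontractivity argument does not close that gap either: for $f=\alpha'\mathbf 1_S$ with $\mu^n(S)=1/\alpha'$ and $\alpha'$ only slightly larger than $\alpha$, one is asking whether $\{T_t\mathbf 1_S>\alpha/\alpha'\}$ can have measure comparable to $\mu^n(S)$, and Bonami--Beckner from $L^p$ to $L^q$ with $p>1$ gives nothing beyond Markov in this regime.

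There is one further gap you do not flag. In the invariance step you assert that $T_{t/2}^{\bar H}g_{w_H}$ is, up to small error, the restriction of $U_{t/2}\widetilde g_{w_H}$ for a \emph{nonnegative} $\widetilde g_{w_H}$ on Gaussian space with the same $L^1$ norm. The natural Gaussian lift in the Mossel--O'Donnell--Oleszkiewicz framework is the multilinear extension evaluated at standard Gaussians, and this is in general not nonnegative even when the Boolean function is. Corollary~\ref{cor:main} (via Theorem~\ref{thm:main}) requires $f\geq 0$; without positivity of $\widetilde g_{w_H}$ the Gaussian bound is unavailable. Taking the positive part destroys the $L^1$ normalization and is not controlled by the standard invariance theorem. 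Relatedly, the invariance error for indicator test functions needs an anti-concentration input, and the usual error terms carry a $d^{O(d)}$ factor that is fatal once $d$ grows with $\alpha$.

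For comparison, the paper's own discussion in Section~\ref{sec:discussion} heads in a different direction: rather than reduce to the Gaussian case, it tries to port the F\"ollmer drift and the perturbation family $\{X_t^\delta\}$ directly to the random walk on $\{-1,1\}^n$. The obstacle identified there is structural: the discrete perturbation satisfies $\E\|X^\delta-W\|^2\asymp\delta\sum_t\E|v_t|$ rather than $\delta^2\sum_t\E|v_t|^2$, so the analog of the gradient estimate \eqref{eq:grad} is too weak. Your invariance route and the paper's direct stochastic route are genuinely different attacks, and both are currently incomplete.
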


It is a straightforward observation that Conjecture \ref{conj:cube} implies Conjecture \ref{conj:main} with $\psi_t = \varphi_t$.
This is proved by embedding Gaussian space (approximately) into a sequence of discrete cubes of growing dimension via the central limit theorem; we refer to
the discussion in \cite{BBBOW13}.
At present, Conjecture \ref{conj:cube} is open for any value of $t > 0$.

In his original paper \cite{T89}, Talagrand did provide a proof of a related inequality for the averaged operator $A = \int_0^1 T_t\,dt$.
Specifically, there is a constant $C > 0$ such that for all $\alpha > e^3$,
\[
\mu^n\left(\left\{x : A f(x) \geq \alpha \|f\|_1 \right\}\right) \leq \frac{C \log \log \alpha}{\log \alpha}\,.
\]
His proof makes clever use of $\approx \log \alpha$ invocations of the aforementioned hypercontractive inequality.

\section{Entropy, energy, and the F\"ollmer drift}
\label{sec:drift}

Fix $n \geq 1$ and consider $\mathbb R^n$ with the equipped with the
standard Euclidean structures $\langle\cdot,\cdot\rangle$ and $\|\cdot\|$, and
the Gaussian measure $\gamma_n$ defined by
$$
\frac{d \gamma_n}{dx} = \frac{1}{(2 \pi)^{n/2}} \exp \left ( - \Vert x \Vert^2 / 2 \right ).
$$
We now lay out the basic objects of our study and
prove some preliminary properties.  In the next section,
we begin with an informal discussion highlighting
a stochastic calculus approach to the geometry of Gaussian space.
This is followed by a broad outline of our arguments.
The formal preliminaries begin in Section \ref{sec:prelims} and the main theorem is proved in Section \ref{sec:mainproof},
save for the core technical lemma of the paper to which Section \ref{sec:temp} is devoted.

\subsection{Overview and proof sketch}

Suppose now that $f : \mathbb R^n \to \mathbb R_+$ has continuous second-order partial derivatives and $\int f\,d\gamma_n = 1$.
Recall that, given $\alpha > 0$, we are interested in showing that $\P(f(B_1) \in [\alpha,2\alpha]) \ll 1/\alpha$
as $\alpha \to \infty$, where $\{B_t\}$ is a Brownian motion with $B_0=0$.
Since $f$ could be concentrated on a set of very small measure, this would necessitate the study
of events of very small probability.
Instead, we will restrict our attention to the interesting
parts of the space by changing the measure of the Brownian motion so that $B_1$ has law $f d\gamma_n$.

To this end, we define an It\^o process $\{W_t\}$ by the stochastic differential equation
\[
W_0 = 0,\quad dW_t = dB_t + v_t\,dt
\]
for some predictable drift process $\{v_t\}$ with respect to the filtration $\{\mathcal F_t\}$
underlying the Brownian motion.  Moreover, we will choose this drift as the
solution to an energy optimization problem.

The following variational viewpoint is taken from the papers of F\"ollmer \cite{Follmer85} and Lehec \cite{Lehec13}.
Lehec's work convincingly demonstrates its geometric applicability
and it provided us with considerable inspiration.
Let us take any predictable drift $\{u_t\}_{t \in [0,1]}$ such that $B_1 + \int_0^1 u_t\,dt$ has law $f d\gamma_n$.
Among all such drifts, we will define $\{v_t\}$ to be the one that minimizes the quantity
\[
\frac12 \int_0^1 \E\, \|u_t\|^2\,dt\,.
\]

It is quite beneficial to think of $\{v_t\}$ as the minimum-energy adapted coupling between $d\gamma_n$ and $f d\gamma_n$.
Furthermore, one can connect this energy to the entropy of $f$:
\begin{equation}\label{eq:optimality}
H_{\gamma_n}(f) = \frac12 \int_0^1 \E\, \|v_t\|^2\,dt\,,
\end{equation}
where $H_{\gamma_n}(f) \defeq \int f \log f\,d\gamma_n$ denotes the relative entropy of $f$ with respect to $\gamma_n$.
It turns out that optimality of $v_t$ implies that $\{v_t\}$ is a martingale with respect to
$\{\mathcal F_t\}$, a fact that will be central in our study.
In particular, the martingale property will imply that the behavior of $\{W_t\}$ at small times
must have echoes that reverberate to time $1$.

As we will see below, one can compute explicitly
\begin{equation}\label{eq:drift0}
v_t = \nabla \log \cP_{1-t} f(W_t)\,.
\end{equation}

This has a straightforward geometric interpretation.  Consider the relative density
\[
\phi_t(x) =  f(x) e^{-\|x-W_t\|^2/2(1-t)}\,,
\]
and let $\bar \phi_t(x)$ be the normalization of $\phi_t(x)$ such that $\bar \phi_t(x)\, dx$ is a probability density.  Then,
\[v_t = (1-t)^{-1} \left(\int x \bar \phi_t(x)\,dx - W_t\right)
\]
is the vector pointing from $W_t$ to the center of mass of $f$ with respect to a Gaussian distribution of variance $1-t$ centered at $W_t$.
The scaling by $(1-t)^{-1}$ stands to reason:  The fact that $W_1 \sim f d\gamma_n$ means that as $t$ approaches $1$,
if $W_t$ is far from the ``bulk'' of $f$, the desperation of the drift increases.

With the optimal drift $v_t$, the process $W_t$ has a useful alternative description: Suppose that the process $\{B_t\}$ has the law of a Brownian motion when the underlying probability space is a equipped with a measure $P$ (hence, $P$ is the ``default'' measure under which we have defined the processes above). Then, the law of the process $\{W_t\}$ (on the space of paths) coincides with the law of $\{B_t\}$ under the measure $f(B_1) d P$.

It is possible to show (e.g., using the tools of the next section) that equation \eqref{eq:drift0}
implies that for every $t \in [0,1]$,
\begin{equation}\label{eq:fisher}
\E\, \|v_t\|^2 = \int \frac{\|\nabla \cP_{1-t} f\|^2}{\cP_{1-t} f}\,d\gamma_n\,.
\end{equation}
The latter quantity is the Fisher information of $\cP_{1-t} f$ (see \cite[Ch. II.5]{BGLbook}).  Thus the order of magnitude of $\|v_t\|$
reflects, in a sense, the ``granularity'' of $f$ on different scales. 
Given our discussion so far, it is difficult to avoid stating Lehec's elegant proof \cite{Lehec13}
of the Gaussian log-Sobolev inequality:
\begin{equation}\label{eq:logsob}
H_{\gamma_n}(f) \stackrel{\eqref{eq:optimality}}{=} \frac12 \int_0^1 \E\,\|v_t\|^2\,dt \leq \frac12 \E\,\|v_1\|^2 \stackrel{\eqref{eq:fisher}}{=} \frac12 \int \frac{\|\nabla f\|^2}{f}\,d\gamma_n\,,
\end{equation}
where the only inequality is an immediate consequence of the fact that $v_t$ is a martingale.

\medskip
\noindent
{\bf Changes of measure and gradient ascent.}
Recall again that our goal is to bound the probability
$\P(f(B_1) \in [\alpha,2\alpha]) \ll \frac{1}{\alpha} \textrm{ as } \alpha \to \infty\,.$
To this end, we will study the Doob martingale $\cP_{1-t} f(B_t)$.
As just argued, it will be beneficial to consider instead the process $\cP_{1-t} f(W_t)$.
After the change of measure, it suffices to prove simply that
$$\P(f(W_1) \in [\alpha,2\alpha]) \to 0$$ as $\alpha \to \infty$, uniformly for all functions satisfying the assumptions of the theorem.
(Since $W_1$ has the law $f d\gamma_n$, it holds that $\P(f(W_1) \in [\alpha,2\alpha]) \geq \alpha \cdot \P(f(B_1) \in [\alpha,2\alpha])$.)

Now the story comes together, as It\^o's formula will tell us that our process $\cP_{1-t} f(W_t)$
can be related directly to the drift $\{v_t\}$:  For all $t \in [0,1]$,
\begin{equation}\label{eq:drift1}
\cP_{1-t} f(W_t) = \exp\left(\int_0^t \langle v_s, dB_s\rangle + \frac12 \int_0^t \|v_s\|^2\,ds \right).
\end{equation}

As alluded to in the introduction, we will bound $\P(f(W_1) \in [\alpha,e \alpha])$ by perturbing the process $\{W_t\}$. We will do this by defining a process $W_t^\delta$ that has two properties:
\begin{enumerate}
   \item The measure of $\{W_t\}$ is relatively insensitive to such perturbations (explained below).
   \item With overwhelming probability, $\log f(W_1^\delta) \geq \log f(W_1) + 1$.
\end{enumerate}
Combining these two properties, we would then have
$$
\P(\log f(W_1) > \log \alpha) \stackrel{\mathrm{(ii)}}{\approx} \P(\log f(W_1^\delta) > \log \alpha + 1 ) \stackrel{\mathrm{(i)}}{\approx} \P(\log f(W_1) > \log \alpha + 1)\,,
$$
yielding an upper bound on $\P(f(W_1) \in [\alpha, e\alpha])$, as desired.

The perturbed processes are essentially of the following form.
For a fixed $\delta > 0$, define
\[
W^{\delta}_t \coloneqq W_t + \delta \int_0^t v_s\,ds.
\]

Let us first address property (i),
that the measure of the process is not affected too significantly by the perturbations.
In other words, we want to find a new measure $P'$ on the space of paths such that:
(1) under this measure, the perturbed processes $W_t^\delta$ has the same distribution as that of the process $W_t$ under the original measure $P$ and (2) the density of $P'$ with respect to $P$ will be close to 1 for ``most'' sample paths.

To this end, we will
employ Girsanov's theorem to tell us that for every $\delta > 0$, there is a measure $Q_{\delta}$ under which $W_t^{\delta}$ is a Brownian motion, and furthermore that
\begin{equation}\label{eq:com}
   \frac{dQ_{\delta}}{dQ} = \exp\left(- \delta \int_0^1 \langle v_t, dB_t\rangle - \left(\delta+\frac{\delta^2}{2}\right) \int_0^1 \|v_t\|^2\,dt\right)\,,
\end{equation}
where $Q=Q_0$ is a measure with respect to which the process $W_t$ is a Brownian motion.

This equality
expresses the relative probability of Brownian motion having the sample path $\{W^{\delta}_t : t \in [0,1]\}$ vs. the sample path $\{W_t : t \in [0,1]\}$. We now recall that the law of the process $\{W_t\}$ coincides with the law of $\{B_t\}$ under the measure $f(B_1) d P$, or in other words $\frac{dP}{d Q} = f(W_1)$. By chaining these two factors, we conclude that the
desired measure $P'$ is given by
\[\frac{dP'}{dP} = \frac{d Q_\delta}{d Q} \frac{f(W_1^\delta)}{f(W_1)}\,.\]

From a high-level perspective, the most important factor in \eqref{eq:com} is
$\exp\left(-\delta \int_0^1 \|v_t\|^2\,dt\right)$.
Thus in order to have $\vphantom{\bigoplus} \frac{d Q_\delta}{d Q} \frac{f(W_1^\delta)}{f(W_1)} \approx 1$,
we need this ``loss'' in measure to be almost exactly compensated by a corresponding increase in the value of $f$
for the perturbed process $W_1^{\delta}$.

In other words, in order to accomplish our goal, we need
\begin{equation}\label{eq:approxeq}
   \frac{f(W_1^\delta)}{f(W_1)} \approx \exp\left(\delta \int_0^1 \|v_t\|^2\,dt\right).
\end{equation}
To this end, we now employ the
Hessian condition \eqref{eq:hess} to conclude that since $W_1^{\delta} = W_1 + \delta \int_0^1 v_t\,dt$, we have
\begin{equation}\label{eq:hessimp}
f(W_{1}^{\delta}) \geq f(W_1) \exp\left(\delta \left\langle v_{1}, \int_0^{1} v_t\, dt\right\rangle - \beta \delta^2 \left\|\int_0^{1} v_t \,dt\right\|^2 \right),
\end{equation}
where we have used the fact that $v_1 = \nabla \log f(W_1)$ from \eqref{eq:drift0}. Ignoring $\delta^2$ term in the preceding
exponent (which requires $\delta$ to be small), \eqref{eq:approxeq} necessitates that
\[
\exp\left(\delta \int_0^1 \|v_t\|^2\,dt\right) \approx \exp\left(\delta \left\langle v_{1}, \int_0^{1} v_t\, dt\right\rangle\right).
\]

Now  we use the martingale property of $v_t$, which implies immediately that
\[\E\left[\left\langle v_1, \int_0^1 v_t\,dt\right\rangle\right] = \int_0^1 \E\,\|v_t\|^2\,dt\,.\]

Thus the last issue we need to address in order to establish property (i) is the {\em concentration} of $\langle v_1, \int_0^1 v_t\,dt\rangle$ and how it interacts with the
many details and lower-order terms that we have glossed over. Controlling this presents the bulk of the technical difficulties in the proof to come. 

Next, let us address property (ii). Going back to equation \eqref{eq:hessimp} and assuming
sufficient concentration of the expression $\langle v_1, \int_0^1 v_t\,dt\rangle$,
it follows that the expression $\log f(W_1^\delta) - \log f(W_1)$ is effectively bounded from below by $\delta \int_0^1 \E\,\|v_t\|^2\,dt$. In turn, the latter expression will be bounded from below using \eqref{eq:drift1}, which can be thought of as a ``path-wise'' log-Sobolev inequality (recall \eqref{eq:logsob}):
Neglecting the martingale term, \eqref{eq:drift1} tells us that whenever the value of $f(W_1)$ is large, so is the expression $\int_0^1 \E\,\|v_t\|^2\,dt$.

On first glance, the crucial fact that the change of measure $\frac{d Q_\delta}{d Q}$ gets almost exactly canceled
out by the term $\frac{f(W_1^\delta)}{f(W_1)}$ may look a bit mysterious. Let us try to shed some light on why \eqref{eq:approxeq}
should hold.
A duality argument based on the Gibbs variational principle (explained, for instance, in \cite{Lehec13})
shows that the drift $v_t$ is extremal in the sense that among all adapted drifts $u_t$, it maximizes the expression
\begin{equation*}\label{eq:variational}
\E \left [\log f \left (B_1 + \int_0^1 u_t\, dt \right ) - \frac{1}{2} \int_0^1 \| u_t \|^2\, dt \right ].
\end{equation*}
As a special case, it follows that the function
\begin{equation}\label{eq:deltad}
\delta \mapsto \E \left [\log f \left (W_1^\delta \right ) - \frac{1}{2} (1+ \delta)^2 \int_0^1 \| v_t\|^2\, dt \right ].
\end{equation}
is maximized at $\delta = 0$.

Under mild assumptions on $f$, this function is analytic and thus its derivative vanishes at $\delta = 0$,
implying that
\[
   \E\left[\log \frac{f(W_1^{\delta})}{f(W_1)}\right] = \delta\, \E\left[\int_0^1 \|v_t\|^2\,dt\right] + O(\delta^2)\,.
\]

The bulk of our argument now amounts to showing that the second derivative of \eqref{eq:deltad} with respect to $\delta$ is not too negative whenever $f$ is semi-log-convex.
In other words, for $\delta > 0$ small enough, the drift $\{(1+\delta) v_t\}$ is not too much worse (in terms of minimizing the functional $\{u_t\} \mapsto \E[\int_{0}^1 \|u_t\|^2\,dt]$) than the optimal adapted drift $\{u_t\}$ that achieves law $W_1^{\delta}$.


\subsection{Formal preliminaries}
\label{sec:prelims}

We fix a non-negative function $f : \mathbb R^n \to \mathbb R_+$
with continuous partial derivatives of second order.
Moreover, we fix a measurable sample space $(\Omega, \Sigma)$ which we assume to be rich enough to support an $n$-dimensional
Brownian motion.


Let $\{W_t : t\in [0,1]\}$ be a process adapted to a filtration $\{\mathcal{F}_t\}$ and let $Q$ be a measure over the sample space $(\Omega, \Sigma)$ such that $W_t$ is a standard $n$-dimensional Brownian motion with respect to $Q$.
Define for all $0 \leq t \leq 1$,
$$
M_t = \cP_{1-t} f(W_t).
$$
Recall that the heat semigroup satisfies
$$
\partial_t \cP_{1-t} f = -\frac12 \Delta \cP_{1-t} f, ~~ \forall 0 < t < 1\,,
$$
and that for all $0 \leq t < 1$, the function $\cP_{1-t} f : \RR^n \to \RR_+$ has continuous derivatives of all orders.
This allows us to apply It\^{o}'s formula (see, e.g., \cite{OxBook}) in order to calculate
\begin{eqnarray}\nonumber
dM_t = d( \cP_{1-t} f(W_t) ) &=& \partial_t \cP_{1-t} f(W_t) dt + \langle \nabla (\cP_{1-t} f)(W_t), dW_t\rangle + \frac12 \Delta (\cP_{1-t} f)(W_t) dt \\
 &=& \langle \nabla (\cP_{1-t} f)(W_t), dW_t\rangle\label{eq:dmt} \\
 &=&
 M_t \langle v_t, dW_t \rangle\,,\nonumber
\end{eqnarray}
where we define
\begin{equation}\label{eq:explicit}
v_t \defeq \frac{\nabla (\cP_{1-t} f)(W_t)}{M_t} = \frac{\nabla (\cP_{1-t} f)(W_t)}{\cP_{1-t} f(W_t)} = \nabla (\log \cP_{1-t} f)(W_t)\,.
\end{equation}
Moreover, by definition of the operator $\cP_{1-t}$ we have $M_t = \E_Q[M_1 | \mathcal{F}_t]$, so that $M_t$ is a martingale under $Q$.

Next, we construct a measure $P$ on $(\Omega, \Sigma)$ using the equation
\begin{equation} \label{dPdQ}
P(A) = \E_Q[ \mathbf{1}_A M_1]
\end{equation}
for every measurable $A \subset \Omega$. We can also formally understand this definition as $\frac{d P}{d Q} = M_1$.

We define an $\mathcal{F}_t$-adapted process $B_t$ by the equation
$$
B_t = W_t - \int_0^t v_s\, ds.$$
In other words, the process $B_t$ is defined by the equations
\begin{equation} \label{eq:defWt}
B_0 = 0, ~~ d W_t = dB_t + v_t\, dt\,.
\end{equation}

The following theorem, which amounts to an application of Girsanov's theorem, immediately follows as a special case of Theorem 2 and Lemma 3 in \cite{Lehec13}.

\begin{theorem} \label{Lehec}
The process $\{B_t : t \in [0,1]\}$ is well-defined. Moreover, this process is an $\mathcal{F}_t$-Brownian motion under the measure $P$. Furthermore, the following assertions hold.
\begin{enumerate}
\item $W_1$ has the law $f d \gamma_n$ under the measure $P$.
\item Almost surely in $P$, $\int_0^t v_t\, dt$ is defined for all $0 \leq t \leq 1$.
\item $\int_0^1 \Vert v_t \Vert^2\, dt < \infty$ almost surely in $P$.
\item $\E_P [\int_0^1 \Vert v_t \Vert^2\, dt ] = 2\, H_{\gamma_n}(f).$
\end{enumerate}
\end{theorem}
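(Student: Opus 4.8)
The statement is a packaging of Girsanov's theorem: one recognizes $\{M_t\}$ as the Girsanov exponential associated with the drift $\{v_t\}$, and then each of the four assertions becomes a verification, the only genuinely delicate point being the behaviour as $t\uparrow 1$. First I would observe that $M_t$ is not merely a local martingale but a uniformly integrable one. Indeed, by the Markov property of $W$ under $Q$ and the semigroup identity, $M_t = P_{1-t}f(W_t) = \E_Q[f(W_1)\mid \mathcal{F}_t]$, and $f(W_1)\in L^1(Q)$ since $\E_Q f(W_1) = \int f\,d\gamma_n < \infty$; moreover $M_1 = P_0 f(W_1) = f(W_1)$. Hence \eqref{dPdQ} really does define a measure $P$ on $\mathcal F_1$ with $\tfrac{dP}{dQ}\big|_{\mathcal F_t} = M_t$ and $P \ll Q$. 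From \eqref{eq:dmt}, \eqref{eq:explicit} and It\^o's formula we get, for each $t<1$,
\[
M_t = \exp\left(\int_0^t \langle v_s, dW_s\rangle - \tfrac12 \int_0^t \|v_s\|^2\,ds\right),
\]
so $M_t$ is exactly the exponential change-of-measure weight for the drift $v_s = \nabla \log P_{1-s}f(W_s)$.

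\textbf{Brownian motion and the law of $W_1$.} Applying Girsanov's theorem on each interval $[0,1-\e]$ shows that $B_t = W_t - \int_0^t v_s\,ds$ is an $\mathcal F_t$-Brownian motion on $[0,1-\e]$ under $P$; since $P\ll Q$ on $\mathcal F_1$ and the paths $t\mapsto \int_0^t v_s\,ds$ are continuous and adapted, letting $\e\to 0$ identifies $\{B_t : t\in[0,1]\}$ as a well-defined $\mathcal F_t$-Brownian motion under $P$, which also gives assertion (2). Assertion (1) is then immediate: for bounded continuous $\varphi$,
\[
\E_P[\varphi(W_1)] = \E_Q[\varphi(W_1)M_1] = \E_Q[\varphi(W_1)f(W_1)] = \int \varphi f\,d\gamma_n,
\]
so $W_1$ has law $f\,d\gamma_n$ under $P$.

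\textbf{Energy equals entropy.} For (3) and (4) I would compute $\E_P[\log M_1]$ in two ways. Using $dW_s = dB_s + v_s\,ds$ in the exponential formula, $\log M_{1-\e} = \int_0^{1-\e}\langle v_s, dB_s\rangle + \tfrac12\int_0^{1-\e}\|v_s\|^2\,ds$. Under $P$ the stochastic integral is a local martingale that is a true martingale on $[0,1-\e]$ (localize, or use that its bracket $\int_0^{1-\e}\|v_s\|^2\,ds$ has finite expectation), so $\E_P[\log M_{1-\e}] = \tfrac12\,\E_P\big[\int_0^{1-\e}\|v_s\|^2\,ds\big]$. On the other hand $M_t\to M_1 = f(W_1)$ $P$-a.s. as $t\uparrow 1$, and by assertion (1), $\E_P[\log M_1] = \E_P[\log f(W_1)] = \int f\log f\,d\gamma_n = H_{\gamma_n}(f)$. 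After justifying the interchange of limit and expectation on the left (e.g.\ via the submartingale structure of $\log M_t$ together with the uniform integrability of $\{M_t\}$), and applying monotone convergence on the right as $\e\to 0$, one obtains $H_{\gamma_n}(f) = \tfrac12\,\E_P\big[\int_0^1\|v_s\|^2\,ds\big]$, i.e.\ assertion (4); finiteness of this expectation then forces (3) (at least when $H_{\gamma_n}(f)<\infty$, and in general (3) follows from the finiteness of the bracket of a continuous-path local martingale under $P$).

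\textbf{Main obstacle.} The one real difficulty is the passage $t\uparrow 1$: the drift $v_t = \nabla\log P_{1-t}f(W_t)$ can blow up as $t\to 1$, so neither Girsanov's theorem nor Novikov's condition can be invoked directly on all of $[0,1]$. The remedy is precisely the truncate-at-$1-\e$-then-take-limits scheme above, whose nontrivial inputs are (i) the identification of $M$ as a genuine uniformly integrable martingale via the semigroup representation $M_t = \E_Q[f(W_1)\mid\mathcal F_t]$, which bypasses any exponential-integrability requirement on $v$, and (ii) the control of $\E_P[\log M_{1-\e}]$ uniformly in $\e$, for which the identity $\log M_1 = \log f(W_1)$ and the already-established law of $W_1$ under $P$ do the work. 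This is carried out in full in \cite[Theorem 2 and Lemma 3]{Lehec13}, and Theorem \ref{Lehec} is the special case of that result for the density $f$; accordingly one may simply cite it.
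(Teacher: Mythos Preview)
Your proposal is correct and takes essentially the same approach as the paper: the paper does not prove Theorem \ref{Lehec} at all but simply cites \cite[Theorem 2 and Lemma 3]{Lehec13}, exactly as you do in your final paragraph. Your additional sketch of the Girsanov-plus-truncation argument is more than the paper provides, and it accurately captures the content of Lehec's proof.
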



Next, fix $\tau \in [0,1]$, and recall that $M_t$ is a martingale. Using equation \eqref{dPdQ}, we learn that for all $A \in \mathcal{F}_{\tau}$,
$$
P(A) = \E_Q \bigl [\E_Q [ \mathbf{1}_A M_1 \mid \mathcal{F}_\tau] \bigr ] = \E_Q[\mathbf{1}_A M_\tau ].
$$
It follows that $\{W_t : t \in [0,\tau]\}$ has the law of a Brownian motion under the measure $\frac{1}{M_\tau} dP$ and, furthermore, that for any $0 \leq s \leq \tau$, the process $\{W_t - W_s : t \in [s,\tau]\}$ has the law of a Brownian motion under measure $\frac{M_s}{M_\tau} dP$. Thus, we also have that
\begin{equation} \label{dPdQ2}
P(A \mid \mathcal{F}_s) = \E_Q \left . \left [ \mathbf{1}_A \frac{ M_\tau }{M_s} \,\right |  \mathcal{F}_s \right   ]
\end{equation}
for all $A \in \mathcal{F}_\tau$. The next fact will be crucial (and is also observed in \cite{Lehec13}, in somewhat greater generality).

\begin{fact}\label{fact:vt}
The process $\{v_t : t\in [0,1]\}$ is a martingale under the measure $P$.
\end{fact}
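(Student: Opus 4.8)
The plan is to verify the martingale property directly by computing the $P$-conditional expectations $\E_P[v_u\mid\mathcal{F}_t]$ for $t\le u$ and checking that they equal $v_t$. The only ingredients are the three facts already in hand: under $Q$ the process $W$ is a standard Brownian motion; the density is $\frac{dP}{dQ}=M_1$ with $M_t=P_{1-t}f(W_t)$ a $Q$-martingale (so $\E_Q[M_1\mid\mathcal{F}_u]=M_u$); and $v_tM_t=\nabla(P_{1-t}f)(W_t)$ by \eqref{eq:explicit}. The whole computation is then a heat-kernel manipulation wrapped in the abstract Bayes formula.

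Concretely, fix $0\le t\le u\le 1$ and let $Y$ be any $\mathcal{F}_u$-measurable, $P$-integrable function. The change-of-measure formula for conditional expectations gives
\[
\E_P[Y\mid\mathcal{F}_t]=\frac{\E_Q[Y\,M_1\mid\mathcal{F}_t]}{\E_Q[M_1\mid\mathcal{F}_t]}=\frac{\E_Q[Y\,\E_Q[M_1\mid\mathcal{F}_u]\mid\mathcal{F}_t]}{M_t}=\frac{\E_Q[Y\,M_u\mid\mathcal{F}_t]}{M_t},
\]
using the $\mathcal{F}_u$-measurability of $Y$ and the $Q$-martingale property of $M$. Taking $Y=v_u$, the factor $M_u$ cancels against $v_uM_u=\nabla(P_{1-u}f)(W_u)$, so
\[
\E_P[v_u\mid\mathcal{F}_t]=\frac{1}{M_t}\,\E_Q\bigl[\nabla(P_{1-u}f)(W_u)\mid\mathcal{F}_t\bigr].
\]
Since $W$ is a $Q$-Brownian motion, the conditional expectation on the right is just heat flow: $\E_Q[g(W_u)\mid\mathcal{F}_t]=(P_{u-t}g)(W_t)$. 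Applying this to $g=\nabla(P_{1-u}f)$, using that the heat semigroup commutes with $\nabla$ (it is convolution with a Gaussian kernel), and using the semigroup identity $P_{u-t}P_{1-u}=P_{1-t}$, the right-hand side becomes $\nabla(P_{1-t}f)(W_t)$. Hence $\E_P[v_u\mid\mathcal{F}_t]=\nabla(P_{1-t}f)(W_t)/M_t=v_t$, which is the claim. (Specializing to $u=1$, where $P_0f=f$, this also exhibits $v_t=\E_P[\nabla\log f(W_1)\mid\mathcal{F}_t]$, i.e. $\{v_t\}$ is the Doob martingale of $v_1=\nabla\log f(W_1)$.)

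What remains is bookkeeping, and this is where the only real care is needed. One must know $v_u\in L^1(P)$ for every $u$; for $u<1$ this follows from smoothing properties of $P_{1-u}$ applied to $f\in L^1(\gamma_n)$ together with $\E_P|v_u|=\E_Q|\nabla(P_{1-u}f)(W_u)|$, and the $L^2$-in-time bound $\E_P[\int_0^1\|v_t\|^2\,dt]=2H_{\gamma_n}(f)$ from Theorem~\ref{Lehec}; the endpoint $t=1$ is the delicate case and is handled either under the standing finiteness of the Fisher-information/entropy of $f$ (so that $\int|\nabla f|\,d\gamma_n<\infty$ by Cauchy--Schwarz) or by first establishing the martingale property on $[0,1)$ and passing to the limit. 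One must also justify interchanging $\nabla$ with $\E_Q[\cdot\mid\mathcal{F}_t]$ and with $P_s$; for $s=u-t>0$ (and $1-u\ge 0$) this is standard differentiation under the Gaussian integral, with the case $u=1$ treated by working with $\nabla f$ directly. I expect the endpoint integrability to be the main obstacle; everything else is routine.

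As a cross-check (and because it is needed elsewhere in the paper, and explains the role of the Hessian hypothesis), one can reach the same conclusion via It\^o's formula applied to $v_t=(\nabla\log P_{1-t}f)(W_t)$: from the parabolic identity $\partial_t(\log P_{1-t}f)=-\tfrac12\bigl(\|\nabla\log P_{1-t}f\|^2+\Delta\log P_{1-t}f\bigr)$ and $dW_t=dB_t+v_t\,dt$, the finite-variation terms cancel exactly, leaving
\[
dv_t=(\nabla^2\log P_{1-t}f)(W_t)\,dB_t,
\]
so $\{v_t\}$ is a $P$-local martingale, upgraded to a true martingale by the same integrability inputs.
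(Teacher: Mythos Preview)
Your argument is correct and is essentially the same as the paper's: both use the abstract Bayes formula $\E_P[Y\mid\mathcal F_s]=\E_Q[YM_t\mid\mathcal F_s]/M_s$ together with the observation that $\sigma_t\defeq v_tM_t=\nabla(P_{1-t}f)(W_t)$ is a $Q$-martingale (via the heat-semigroup identity and commutation with $\nabla$). The paper writes it slightly more compactly by first recording that $\sigma_t$ is a $Q$-martingale and then invoking \eqref{dPdQ2}, but the content is identical; your additional remarks on integrability and the It\^o-formula cross-check go beyond what the paper records.
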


To see this, fix some $0 \leq s \leq t \leq 1$. Define $\sigma_t = \nabla \cP_{1-t} f(W_t) = \cP_{1-t} (\nabla f)(W_t)$ (recalling that $f$ is twice-differentiable). Since $W_t$ is a $Q$-Brownian motion, we have
\begin{align}
\E_Q [\sigma_t \mid \mathcal{F}_s] & = \E_Q [ \cP_{1-t} (\nabla f)(W_t) \mid \mathcal{F}_s ] \nonumber \\
& = \E_Q[ \nabla f(W_1) \mid \mathcal{F}_s ] \nonumber
\\ & = \cP_{1-s} (\nabla f) (W_s) \nonumber
\\ & = \sigma_s\,. \label{eq:sigmamartingale}
\end{align}
This yields
$$
\E_P [v_t \mid \mathcal{F}_s]  = \E_P \left . \left [ \frac{\sigma_t}{M_t} \,\right | \mathcal{F}_s \right ]
 \stackrel{\eqref{dPdQ2}}{=} \E_Q \left . \left [ \frac{\sigma_t}{M_s} \,\right | \mathcal{F}_s \right ]
 \stackrel{\eqref{eq:sigmamartingale}}{=} \frac{\sigma_s}{M_s}
 = v_s.
$$
which establishes the fact.

Finally, using It\^{o}'s formula, equation \eqref{eq:dmt} becomes
$$
d \log M_t = \langle v_t, dW_t\rangle - \frac{1}{2} \|v_t\|^2 dt,
$$
yielding the representation
\begin{align}
\cP_{1-t} f(W_t) = M_t &= \exp\left(\int_0^t \langle v_s, dW_s\rangle - \frac12 \int_0^t \|v_s\|^2\,ds\right) \nonumber\\
&= \exp\left(\int_0^t \langle v_s, dB_s\rangle + \frac12 \int_0^t \|v_s\|^2\,ds\right).\label{eq:repone}
\end{align}
A combination of \eqref{dPdQ} with the last equation finally gives
\begin{equation}
\label{eq:change}
dQ = \exp\left(-\int_0^{1} \langle v_t, dB_t\rangle -\frac12 \int_0^{1} \|v_t\|^2\,dt \right) dP = \frac{1}{M_1} dP = \frac{1}{f(W_1)} dP\,.
\end{equation}
Remark that the above equation makes sense because $M_1>0$ almost surely, since $W_1$ is in the support of $f$. 

In the next section, all probabilities and expectations are taken by default with respect to $P$, the law under which the process $\{B_t\}$ is a Brownian motion.  When we refer to another measure $Q$, we will use the notations $\P_Q$ and $\E_Q$.

\subsection{Proof of the Main Theorem}
\label{sec:mainproof}

Consider a measurable function $f : \mathbb R^n \to \mathbb R_+$ with continuous partial derivatives of second order, such that $\int f\,d\gamma_n=1$
and such that for some $\beta > 1$ and all $x \in \mathbb R^n$,
\begin{equation}\label{eq:hesscondition}
\nabla^2 \log f(x) \succeq -\beta\,.
\end{equation}
We will use the processes and measures defined in Section \ref{sec:prelims} (which depend on $f$).

Recall that $W_1$ has the law of $f\,d\gamma_n$.
As stated previously,
it suffices to prove a (uniform) anti-concentration estimate for the information content of $W_1$.
The next lemma constitutes the main technical step of our argument; its proof occupies Section~\ref{sec:temp}.

%

\begin{lemma}\label{lem:mass}
There is a universal constant $C \geq 1$ such that if
$\alpha$ satisfies $\tfrac{\beta \log \log \alpha}{\log \alpha} < C$, then for all $\beta \geq 1$ and
$\eps > \alpha^{-1/64\beta}$,
	\begin{equation*}\label{eq:qgrowth2}
      \P\left(\vphantom{\bigoplus}\!\log f(W_1) \in \left[\log(\alpha), \log(\alpha) + \eps\right]\right) \leq
   20 \eps \sqrt{\frac{\beta \log \log \alpha}{\log \alpha}}.
	\end{equation*}
\end{lemma}

The statement of the preceding lemma was greatly simplified
by suggestions of Lehec after a draft of
this manuscript was initially circulated.
We thank him for his permission in revising our
argument to incorporate some of his ideas.
In particular, we initially obtained a slightly worse
quantitative dependence of $(\log \log \alpha)^4/\sqrt{\log \alpha}$.

Let us note that Lemma \ref{lem:mass} (with $\e=1$) does indeed yield our goal.
Since $W_1$ has the law of $f d\gamma_n$:  For $\alpha$ sufficiently large,
\begin{align*}
   \int \1_{\{f(x) > \alpha\}} \,d\gamma_n(x) 
   &= \sum_{k=0}^{\infty} \int \1_{\{f(x) \in [e^k \alpha, e^{k+1} \alpha)\}}\,d\gamma_n(x)
  \\ 
  &\leq \frac{1}{\alpha}
  \sum_{k=0}^{\infty} e^{-k} \int f(x) \1_{\{f(x) \in [e^k \alpha, e^{k+1}\alpha]\}}d\gamma_n(x) \\
  &=
  \frac{1}{\alpha} \sum_{k=0}^{\infty} e^{-k} \ \P\left(\log f(W_1) \in [k + \log \alpha, k + \log \alpha + 1]\right) \\
  &\leq O(1) \frac{1}{\alpha} \sqrt{\frac{\beta \log \log \alpha}{\log \alpha}}\,.
\end{align*}

\section{Anti-concentration of the information content}

\label{sec:temp}

Our goal is now to prove Lemma \ref{lem:mass}.
Section \ref{sec:setup} sets up an associated family of stochastic processes.
In Section \ref{sec:estimates}, we provide some preliminary estimates, and in
Section \ref{sec:mass} we complete the proof of Lemma \ref{lem:mass}.

\subsection{The perturbations}
\label{sec:setup}

We now couple our process $W_t$ with a family of It\^{o} processes.

Fix $\alpha \geq e^3$ and
define a stopping time
\[
T \coloneqq 1 \wedge \inf \left\{t : \int_0^t \|v_s\|^2\,ds \geq 2 \log \alpha\right\}
\,.
\]
By defintion,
\begin{equation}\label{eq:large2}
T < 1 \implies \int_0^T \|v_t\|^2\,dt = 2 \log \alpha\,.
\end{equation}
Moreover, Jensen's inequality yields
\begin{equation}\label{eq:JensenT}
\left\|\int_0^T v_t\, dt\right\|^2 \leq \int_0^T \|v_t\|^2\,dt \leq 2 \log \alpha\,.
\end{equation}

For $\delta \in \RR$, we define $\{X_t^{\delta} : t \in [0,1]\}$ by
\[
X^{\delta}_t \coloneqq B_t + \int_0^t \left(1+\delta \1_{\{s \leq T\}}\right) v_s\, ds\, = W_t + \delta \int_0^{t \wedge T} v_s\, ds\, .
\]

Next, we would like to argue that Girsanov's formula (see, e.g., \cite[Chapter 6]{LS}) applies so that $\{X_t^{\delta} : t \in [0,1]\}$ has the law of a Brownian motion under the change of measure
\begin{equation}\label{eq:girsanov}
dQ_{\delta} = \exp\left(- \int_0^{1} (1+ \delta \1_{\{t \leq T\}}) \langle v_t, dB_t\rangle - \frac12 \int_0^{1} (1+\delta \1_{\{t \leq T\}})^2 \|v_t\|^2 dt\right) dP.
\end{equation}
To see this, we first notice that by definition of the stopping time $T$, almost surely
$$
\int_0^1 \Vert \delta \1_{\{t \leq T\}}  v_t \Vert^2 dt \leq 2 \delta^2 \log \alpha\,.
$$
It follows that
$$
\E_Q \left [ \exp \left ( \tfrac 1 2 \int_0^1 \Vert \delta \1_{\{t \leq T\}}  v_t \Vert^2\, dt \right  )  \right ] < \infty\,.
$$
In other words, Novikov's condition holds over the measure $Q$ for the drift $\delta \1_{ \{t \leq T \}} v_t$,
so Girsanov's formula is valid.
In particular,
$\{X_t^{\delta} : t \in [0,1]\}$ has the law of a Brownian motion under the change of measure
\begin{align*}
dQ_{\delta} & = \exp\left(- \int_0^{1} \delta \1_{\{t \leq T\}} \langle v_t, dW_t\rangle - \frac12 \int_0^{1} \delta^2 \1_{\{t \leq T\}} \|v_t\|^2\, dt\right) dQ \\
& = \exp\left(- \int_0^{1} \delta \1_{\{t \leq T\}} \langle v_t, d B_t\rangle - \frac12 \int_0^{1} (\delta^2 + 2 \delta) \1_{\{t \leq T\}} \|v_t\|^2\, dt\right) dQ.
\end{align*}
Combining this with the change of measure formula \eqref{eq:change} yields \eqref{eq:girsanov}. An immediate consequence of the latter is the following:

\begin{fact}
For any interval $I \subset \RR$, one has that
\begin{equation}\label{eq:fact3}
\P(\log f(W_1) \in I) = \E \left[f(X_{1}^{\delta}) \frac{d Q_{\delta}}{dP} \1_{\{\log f(X_1^{\delta}) \in I \}} \right].
\end{equation}
\end{fact}

A central component of the proof will be a lower bound the right hand side of the last equation.
From assumption \eqref{eq:hesscondition} (which comes from \eqref{eq:hess} in Theorem \ref{thm:main}),
it follows that for all $z,u \in \mathbb R^n$,
\[
f(z+u) \geq f(z) \exp \left(\langle u, \grad \log f(z) \rangle - \tfrac \beta 2 \|u\|^2 \right)\,.
\]
Combining this with \eqref{eq:explicit} and fact that $X_{1}^{\delta} = W_{1} + \delta \int_0^{T} v_t\, dt$ yields
\begin{equation}\label{eq:grad}
f(X_{1}^{\delta}) \geq f(W_1) \exp\left(\delta \left\langle v_{1}, \int_0^{T} v_t\, dt\right\rangle - \tfrac 1 2 \beta \delta^2 \left\|\int_0^{T} v_t\, dt\right\|^2 \right)\,.
\end{equation}

Finally, recalling \eqref{eq:change} and \eqref{eq:girsanov}, we have the expression
\begin{equation}\label{eq:change2}
f(W_1) \frac{dQ_{\delta}}{d P} = \frac{dQ_{\delta}}{dQ} = \exp\left(- \delta \int_0^T \langle v_t, dB_t\rangle - \left(\delta+\frac{\delta^2}{2}\right) \int_0^T \|v_t\|^2\,dt\right).
\end{equation}

\subsection{Gradients, stopping times, and the change of measure}
\label{sec:estimates}

Let us define now the random variable
\[
   Z\coloneqq\int_0^T \bigl (\left \langle v_{1}-v_t, v_t \right\rangle dt - \langle v_t,dB_t\rangle \bigr )\,,
\]
and for $\lambda,\gamma \geq 0$,
the following two events:
\begin{eqnarray*}
   \mathcal E_{\lambda} &\coloneqq& \left\{ Z \leq -\lambda \right\}\,, \\
   \mathcal B_{\gamma} &\coloneqq& \left\{ \left |\int_0^T \langle v_t, dB_t\rangle \right | \geq \gamma \sqrt{\log \alpha} \right\}\,.
\end{eqnarray*}

The next two lemmas bound the probabilities of these ``bad'' events.
Additionally, the next lemma provides a key estimate on the concentration of the quantity $f(X_1^{\delta}) \frac{d Q_{\delta}}{dP}$ which will be used in conjunction with equation \eqref{eq:fact3}.

\begin{lemma}\label{lem:lambdabound}
For every $\lambda \geq 0$, we have
\[\pr(\mathcal E_{\lambda}) \leq \exp \left ( - \frac{\lambda^2}{12 \beta \log \alpha} \right )\,.
\]
Furthermore, for any measurable event $\mathcal A$ and any $\delta>0$ and $\lambda > 0$, it holds that
\begin{equation}\label{eq:important}
\E \left[f(X_{1}^{\delta}) \frac{d Q_{\delta}}{dP} \1_{\mathcal A} \right]
\geq\exp\left(-3 \beta \delta^2 \log \alpha - \delta \lambda \right) \P \bigl ( \mathcal A \setminus  \mathcal E_{\lambda} \bigr ) .
\end{equation}
\end{lemma}

\begin{proof}
From \eqref{eq:grad}, the following inequality holds $P$-almost surely
\begin{align}
f(X^{\delta}_{1}) & \frac{dQ_{\delta}}{dP} \nonumber \\
&=
f(W_1) \frac{f(X_{1}^{\delta})}{f(W_1)} \frac{d Q_{\delta}}{dP} \nonumber \\
&\stackrel{\mathclap{\eqref{eq:change2} \wedge \eqref{eq:grad}}}{\geq}\ 
 \exp\left(\delta \left\langle v_{1}, \int_0^{T} v_t dt \right\rangle - \tfrac 1 2 \beta \delta^2 \left\|\int_0^{T} v_t dt\right\|^2 - \delta \int_{0}^{T} \langle v_t, d B_t\rangle - \frac{2\delta+\delta^2}{2} \int_0^{T} \|v_t\|^2 dt\right) \nonumber \\
&= \ 
 \exp\bigl (\delta Z \bigr) {\exp\left(- \tfrac 1 2 \beta \delta^2 \left\|\int_0^T v_t dt\right\|^2- \frac{\delta^2}{2} \int_0^T \|v_t\|^2 dt\right)}. \label{eq:large-value}
 \end{align}

Using \eqref{eq:JensenT} and the assumption that $\beta \geq 1$, we can lower bound the second factor in \eqref{eq:large-value}:
\begin{equation}\label{eq:pre-twist}
f(X^{\delta}_{1}) \frac{dQ_{\delta}}{dP} \geq \exp\bigl (\delta Z \bigr) \exp\left(-3 \beta \delta^2 \log \alpha\right).
\end{equation}
Taking expectations and using the fact that $1=\E [f(B_{1})] = \E[f(X^{\delta}_{1}) \frac{dQ_{\delta}}{dP}]$, we conclude that
\begin{equation}\label{eq:sandwich}
  \E \left[ \exp \left( \delta Z \right) \right] \leq \E \left [ f(X_1^\delta) \frac{d Q_\delta}{dP} \exp\left(3 \beta \delta^2 \log \alpha\right) \right ] \leq \exp\left(3 \beta \delta^2 \log \alpha\right).
\end{equation}
Using Markov's inequality now gives for all $\delta < 0$,
$$
\P (Z \leq - \lambda ) = \P \left ( e^{\delta Z} \geq e^{- \delta \lambda} \right ) \leq e^{3 \beta \delta^2 \log \alpha + \delta \lambda}.
$$
The above is true for any $\delta < 0$. Optimizing over $\delta$, we take $\delta = - \frac{\lambda}{6 \beta \log \alpha}$ to attain
$$
\P (Z \leq - \lambda ) \leq \exp \left ( - \frac{\lambda^2}{12 \beta \log \alpha} \right ).
$$
This establishes the first claim of the lemma.

For the second claim, take $\delta > 0$ to be arbitrary.
Multiply on both sides of \eqref{eq:pre-twist} by $\1_{\mathcal A}$ to obtain
\begin{eqnarray*}
\E \left[
f(X^{\delta}_{1}) \frac{dQ_{\delta}}{dP} \1_{\mathcal A}\right]
&\geq& \exp\left(-3 \beta \delta^2 \log \alpha\right) \E[e^{\delta Z} \1_{\mathcal A}] \\
&\geq& \exp\left(-3 \beta \delta^2 \log \alpha\right) \E[e^{\delta Z} \1_{\mathcal A \setminus \mathcal E_\lambda }]  \\
&\geq& \exp\left(-3 \beta \delta^2 \log \alpha\right) e^{- \delta \lambda} \P \bigl ( \mathcal A \setminus  \mathcal E_{\lambda} \bigr ), \\
\end{eqnarray*}
completing the proof.
\end{proof}

\begin{lemma}\label{lem:gammabound}
For every $\gamma \geq 0$, it holds that
\[\pr(\mathcal B_{\gamma}) \leq 2 e^{-\gamma^2/4}\,.
\]
\end{lemma}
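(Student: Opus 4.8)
The plan is to recognize that the statement is a standard Bernstein--type tail bound for a continuous martingale with bounded quadratic variation, where the stopping time $T$ has been engineered precisely to supply that bound. Let $N_t \defeq \int_0^{t} \langle v_s, dB_s\rangle$, a continuous local martingale under $P$ (since $\{B_t\}$ is a $P$-Brownian motion and $\{v_t\}$ is predictable with $\int_0^1 \|v_t\|^2\,dt < \infty$ a.s.\ by Theorem \ref{Lehec}(iii)), with quadratic variation $\langle N\rangle_t = \int_0^t \|v_s\|^2\,ds$. By the definition of $T$ --- in fact only its second defining infimum matters here --- we have $\langle N\rangle_T = \int_0^T \|v_s\|^2\,ds \leq 2\log\alpha$ almost surely, so the stopped process $\{N_{t\wedge T}\}$ is an $L^2$-bounded martingale even though $N$ itself need not be.

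Next I would fix a parameter $\theta \geq 0$ and introduce the stochastic exponential
\[
Y_t \defeq \exp\left(\theta N_{t\wedge T} - \frac{\theta^2}{2}\, \langle N\rangle_{t\wedge T}\right).
\]
Since the integrand $\theta\1_{\{s\leq T\}} v_s$ is predictable and $\int_0^1 \|\theta\1_{\{s\leq T\}} v_s\|^2\,ds \leq 2\theta^2\log\alpha$ almost surely, Novikov's condition holds, so $\{Y_t\}$ is a genuine martingale with $\E[Y_1] = 1$ (alternatively, one may just use that the nonnegative local martingale $\{Y_t\}$ is a supermartingale, which already yields $\E[Y_1] \leq 1$). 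Combining $\E[Y_1] \leq 1$ with the pointwise bound $\langle N\rangle_T \leq 2\log\alpha$ gives $\E\left[\exp(\theta N_T)\right] \leq \exp(\theta^2 \log\alpha)$.

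Now a Markov (Cram\'er--Chernoff) inequality, together with $T \leq 1$ so that $N_{1\wedge T} = N_T$, gives for every $\theta \geq 0$
\[
\pr(\mathcal B_\gamma) = \pr\left(N_T \geq \gamma\sqrt{\log\alpha}\right) \leq \exp\left(\theta^2\log\alpha - \theta\gamma\sqrt{\log\alpha}\right),
\]
and the choice $\theta = \gamma/(2\sqrt{\log\alpha})$ makes the exponent equal to $-\gamma^2/4$, which is exactly the claimed bound.

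I do not expect a genuine obstacle here: the argument is the textbook one-line optimization. The only point meriting a sentence of care is the justification that $\E[Y_1] \leq 1$, which is precisely where the localization by $T$ is used --- it makes the relevant stochastic integral bounded, so Novikov's criterion applies (and one can fall back on the supermartingale property of nonnegative local martingales if preferred). Everything else is routine.
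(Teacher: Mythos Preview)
Your argument is correct. The key ingredients---that $\langle N\rangle_T \leq 2\log\alpha$ almost surely by the definition of $T$, that Novikov's criterion (or the supermartingale property) gives $\E[Y_1]\leq 1$, and the Chernoff optimization---are all sound, and the final constant $e^{-\gamma^2/4}$ falls out exactly.

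The paper takes a different but equally standard route: it applies the Dambis--Dubins--Schwartz time-change so that $S(t)=\int_0^{V^{-1}(t)}\langle v_s,dB_s\rangle$ is a Brownian motion up to time $V(1)=\int_0^1\|v_s\|^2\,ds$, observes that $V(T)\leq 2\log\alpha$ by definition of $T$, and then bounds $\P(|S(T)|\geq \gamma\sqrt{\log\alpha})$ by Doob's maximal inequality on $[0,2\log\alpha]$ combined with a Gaussian tail estimate. Your exponential-martingale approach is arguably more direct---it avoids time-change and maximal inequalities entirely and handles the one-sided event $\mathcal B_\gamma$ without passing through the two-sided bound. The paper's approach, on the other hand, makes the underlying Brownian structure explicit and yields the two-sided estimate for free. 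Both are textbook proofs of the same Bernstein-type bound for continuous martingales with bounded quadratic variation.
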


\begin{proof}
Consider the quadratic variation process
$$
V(t) = \int_0^t  \|v_s\|^2 ds.
$$
According to the theorem of Dambis and Dubins-Schwartz (see, e.g., \cite[Chapter V, Theorem 1.10]{RY}), the process
$$
S(t) \coloneqq \int_0^{V^{-1}(t)} \langle v_s, dB_s\rangle
$$
is a Brownian motion up to the stopping time $\tau = V(1)$.

Using Doob's theorem (e.g., \cite[Chapter II, Theorem 1.7]{RY}) and a standard Gaussian tail estimate (e.g., \cite[Chapter II, Proposition 1.8]{RY}), we have
\begin{equation}\label{eq:tail}
\P\left( F_\gamma \right ) \leq 2 e^{-\gamma^2/4}, \,
\end{equation}
where
$$
F_\gamma \coloneqq \left \{ \max_{t \in [0, 2 \log \alpha]} |S(t)| \geq \gamma \sqrt{\log \alpha} \right \}.
$$
By definition of the stopping time $T$, it holds that $V(T) \leq 2\log \alpha$,
thus
\[
\pr\left(|S(V(T))| \geq \gamma \sqrt{\log \alpha} \right) \leq \pr(F_{\gamma})\,,
\]
completing the proof in light of \eqref{eq:tail}.
\end{proof}

\subsection{Expansion of the level sets}
\label{sec:mass}

We now establish Lemma~\ref{lem:mass}.
Recall that the goal is to show that for all $\alpha$ sufficiently large and $\e > \alpha^{-1/64\beta}$, 
\[
\P\left(\log f(W_1) > \e + \log \alpha\right)
\geq \P\left(\log f(W_1) \geq \log \alpha\right) - 20 \e \sqrt{\frac{\beta \log \log \alpha}{\log \alpha}}\,.
\]
\begin{proof}[Proof of Lemma~\ref{lem:mass}]
Define the event $\mathcal A = \{ \log f(W_1) \geq \log \alpha \}$ and put
\[
\mathcal G \defeq \mathcal A \setminus \{ \mathcal E_{\lambda} \cup \mathcal B_{\gamma}\}
\]
where $\gamma \coloneqq \frac{1}{4} \sqrt{\log \alpha}$ and the value of $\lambda$ will be specified shortly. An application of Lemma \ref{lem:lambdabound} ensures that $\P(\mathcal E_{\lambda}) \leq \exp \left ( - \frac{\lambda^2}{12 \beta \log \alpha} \right )$ and an application of Lemma \ref{lem:gammabound} ensures that $\P(\mathcal B_{\gamma}) \leq 2 \exp(-\gamma^2/4)$. A union bound then yields
\begin{equation}\label{eq:remaining}
\P(\mathcal G) \geq \P(\mathcal A) - \exp \left ( - \frac{\lambda^2}{12 \beta \log \alpha} \right ) - 2 \exp(-\gamma^2/4).
\end{equation}
Our first objective is to show that under suitable assumptions on the parameters $\delta, \lambda$ one has the implication
\begin{equation}\label{eq:gimp}
\mathcal G \mbox { holds } \implies \log f(X_1^{\delta}) > \e + \log \alpha\,.
\end{equation}

To this end, and in light of the gradient estimate \eqref{eq:grad}, one
would like to bound the quantity $\int_0^T \langle v_1, v_t \rangle dt$ from below. Recall that \eqref{eq:repone} implies
\begin{equation}\label{eq:large}
\log f(W_1) = \int_0^1 \langle v_t, dB_t\rangle + \frac12 \int_0^1 \|v_t\|^2\, dt \,.
\end{equation}

Thus conditioned on the event $\overline{\mathcal B_\gamma \cup \mathcal E_\lambda}$,
it holds that
\begin{align*}
\int_0^T \langle v_1, v_t \rangle dt ~& = \quad \int_0^T \langle v_1-v_t, v_t \rangle dt + \int_0^T \|v_t\|^2 dt \\
&= \quad Z + \int_0^T \|v_t\|^2 dt + \int_0^T \langle v_t, d B_t \rangle \\
& \stackrel{\mathclap{\eqref{eq:large} \wedge \eqref{eq:large2}}}{=} \quad Z + \1_{ \{T < 1 \}} \left ( 2 \log \alpha + \int_0^T \langle v_t, d B_t \rangle \right ) + \1_{\{T=1\}} \log f(W_1) \\
& > \quad -\lambda + \min \bigl(\log \alpha, \log f(W_1) \bigl) -\gamma \sqrt{\log \alpha}\,.
\end{align*}
We conclude that, under the assumptions
\begin{equation}\label{eq:assump}
\lambda \leq \frac{1}{4} \log \alpha, ~~ \gamma \leq \frac{1}{4} \sqrt{\log \alpha}\,,
\end{equation}
one has the implication
\begin{equation} \label{eq:gT}
\mathcal G \mbox { holds } \implies \int_0^T \langle v_1, v_t\rangle \,dt \geq \frac{\log \alpha}{2}\,.
\end{equation}

Combining this with the gradient estimate \eqref{eq:grad}, 
one sees that
if $\mathcal G$ holds then
\begin{align*}
f(X_1^{\delta}) &\geq f(W_1) \exp\left(\delta \left\langle v_{1}, \int_0^{T} v_t\, dt\right\rangle - \beta \delta^2 \left\|\int_0^{T} v_t dt\right\|^2 \right)
\\
&\stackrel{\mathclap{\eqref{eq:JensenT}}}
   \geq \alpha \exp\left(\vphantom{\bigoplus} \tfrac{1}{2} \delta \log \alpha - \beta \delta^2 \log \alpha  \right).
\end{align*}

Under the additional assumption
\begin{equation}\label{eq:assump2}
\tfrac{1}{2} \delta \log \alpha -  2 \beta \delta^2 \log \alpha \geq \eps\,,
\end{equation}
this establishes \eqref{eq:gimp}. 

We can therefore write
\[
\P\left(\log f(W_1) > \e + \log \alpha \right)
   \stackrel{\eqref{eq:fact3}}{=} \E \left[f(X_{1}^{\delta}) \frac{d Q_{\delta}}{dP} \1_{\{\log f(X_1^{\delta}) > \log \alpha + \eps \} } \right ]
   \stackrel{\eqref{eq:gimp}}{\geq} \E \left[f(X_{1}^{\delta}) \frac{d Q_{\delta}}{dP} \1_{\mathcal{G}  } \right ] \,.
\]
Finally, we invoke \eqref{eq:important}, which gives
\begin{align}
\P(\log f(W_1) > \e + \log \alpha) ~
   & \geq \exp \left ( - 3\beta \delta^2 \log \alpha - \delta \lambda \right ) \P( \mathcal{G} ) \nonumber\\
   & \stackrel{\mathclap{\eqref{eq:remaining}}}{\geq} \exp \left ( - 3\beta \delta^2 \log \alpha - \delta \lambda \right ) \P( \mathcal{A}) - \exp \left ( - \frac{\lambda^2}{12 \beta \log \alpha} \right ) - 2 \exp(-\gamma^2/4) \nonumber \\
   & \geq \P( \mathcal{A}) - 3\beta \delta^2 \log \alpha - \delta \lambda - \exp \left ( - \frac{\lambda^2}{12 \beta \log \alpha} \right ) - 2 \exp(-\gamma^2/4).\label{eq:almost}
\end{align}

Now choose:
\begin{align*}
   \lambda &\coloneqq \sqrt{12 \beta \log \alpha (\log \tfrac{1}{\e} + \log \log \alpha)}\,, \\
   \delta &\coloneqq \frac{4 \eps}{\log \alpha}\,.
\end{align*}
Using the assumption $\tfrac{12 \beta \log \log \alpha}{\log \alpha} < \frac{1}{16}$, it is straightforward to verify that these choices satisfy assumptions \eqref{eq:assump} and \eqref{eq:assump2}.
Recalling \eqref{eq:almost}, we have
$$
\P(\log f(W_1) > \e + \log \alpha) \geq \P( \mathcal{A}) - 48 \e^2 \frac{\beta}{\log \alpha} - 4 \e \sqrt{\frac{12 \beta (\log \frac{1}{\e}+\log \log \alpha)}{\log \alpha}} - \frac{\e}{\log \alpha} - 2 \alpha^{-1/64}.
$$

By assuming that
$\alpha$ is sufficiently large, this implies that for $\e > \alpha^{-1/64\beta}$,
$$
\P(\log f(W_1) \geq \log \alpha + \eps) > \P(\mathcal A) - 20 \e \sqrt{\frac{\beta \log \log \alpha}{\log \alpha}}\,,
$$
and thus completes the proof.
\end{proof}

\subsection*{Acknowledgements}

The authors are grateful to an anonymous referee for greatly
improving the presentation of the paper. His or her suggestions led to significant simplications in our arguments.
Some of the revisions partially follow ideas of Lehec that arose
from his reading of initial drafts of our manuscript.
We are extremely grateful for his permission to incorporate
those ideas into the proof.

The authors would like to thank Elchanan Mossel for encouraging their collaboration, and
Joseph Lehec for a careful reading of many early drafts of this manuscript,
as well as numerous insightful comments.
Both authors were partially supported by NSF grants CCF-1217256 and CCF-1407779.

Much of this work was done
at Microsoft Research in Redmond.
We are also grateful to the Simons Institute in Berkeley for hosting us
during the final portion of the project.
R.E. acknowledges Jian Ding for introducing him to Talagrand's problem, and
J.L. thanks Ryan O'Donnell for enlightening preliminary discussions.

\bibliographystyle{alpha}
\bibliography{convolution}

\newcommand{\etalchar}[1]{$^{#1}$}
\begin{thebibliography}{BBB{\etalchar{+}}13}

\bibitem[BBB{\etalchar{+}}13]{BBBOW13}
K.~Ball, F.~Barthe, W.~Bednorz, K.~Oleszkiewicz, and P.~Wolff.
\newblock {$L^1$}-smoothing for the {O}rnstein-{U}hlenbeck semigroup.
\newblock {\em Mathematika}, 59(1):160--168, 2013.

\bibitem[BD98]{BD98}
Michelle Bou{\'e} and Paul Dupuis.
\newblock A variational representation for certain functionals of {B}rownian
  motion.
\newblock {\em Ann. Probab.}, 26(4):1641--1659, 1998.

\bibitem[Bec75]{Beckner75}
William Beckner.
\newblock Inequalities in {F}ourier analysis.
\newblock {\em Ann. of Math. (2)}, 102(1):159--182, 1975.

\bibitem[BGL14]{BGLbook}
Dominique Bakry, Ivan Gentil, and Michel Ledoux.
\newblock {\em Analysis and geometry of {M}arkov diffusion operators}, volume
  348 of {\em Grundlehren der Mathematischen Wissenschaften [Fundamental
  Principles of Mathematical Sciences]}.
\newblock Springer, Cham, 2014.

\bibitem[BM11]{BM11}
Sergey Bobkov and Mokshay Madiman.
\newblock Concentration of the information in data with log-concave
  distributions.
\newblock {\em Ann. Probab.}, 39(4):1528--1543, 2011.

\bibitem[Bon70]{Bonami71}
Aline Bonami.
\newblock \'{E}tude des coefficients de {F}ourier des fonctions de
  {$L^{p}(G)$}.
\newblock {\em Ann. Inst. Fourier (Grenoble)}, 20(fasc. 2):335--402 (1971),
  1970.

\bibitem[Bor00]{Borell00}
Christer Borell.
\newblock Diffusion equations and geometric inequalities.
\newblock {\em Potential Anal.}, 12(1):49--71, 2000.

\bibitem[Bor02]{Borell02}
Christer Borell.
\newblock Isoperimetry, log-concavity, and elasticity of option prices.
\newblock In {\em New directions in Mathematical Finance}, pages 73--91. Wiley,
  2002.
\newblock Edited by P. Wilmott and H. Rasmussen.

\bibitem[Bre11]{B11}
Haim Brezis.
\newblock {\em Functional analysis, {S}obolev spaces and partial differential
  equations}.
\newblock Universitext. Springer, New York, 2011.

\bibitem[Cha14]{Chatterjee14}
Sourav Chatterjee.
\newblock {\em Superconcentration and related topics}.
\newblock Springer Monographs in Mathematics. Springer, Cham, 2014.

\bibitem[DGS92]{DLS92}
E.~Brian Davies, Leonard Gross, and Barry Simon.
\newblock Hypercontractivity: a bibliographic review.
\newblock In {\em Ideas and methods in quantum and statistical physics ({O}slo,
  1988)}, pages 370--389. Cambridge Univ. Press, Cambridge, 1992.

\bibitem[Eld13]{Eldan13}
Ronen Eldan.
\newblock Thin shell implies spectral gap up to polylog via a stochastic
  localization scheme.
\newblock {\em Geom. Funct. Anal.}, 23(2):532--569, 2013.

\bibitem[Eld15]{EldanArxiv}
Ronen Eldan.
\newblock A two-sided estimate for the {G}aussian noise stability deficit.
\newblock {\em Invent. Math.}, 201(2):561--624, 2015.

\bibitem[F{\"o}l85]{Follmer85}
H.~F{\"o}llmer.
\newblock An entropy approach to the time reversal of diffusion processes.
\newblock In {\em Stochastic differential systems ({M}arseille-{L}uminy,
  1984)}, volume~69 of {\em Lecture Notes in Control and Inform. Sci.}, pages
  156--163. Springer, Berlin, 1985.

\bibitem[Gro75]{Gross75}
Leonard Gross.
\newblock Logarithmic {S}obolev inequalities.
\newblock {\em Amer. J. Math.}, 97(4):1061--1083, 1975.

\bibitem[Gro06]{Gross06}
Leonard Gross.
\newblock Hypercontractivity, logarithmic {S}obolev inequalities, and
  applications: a survey of surveys.
\newblock In {\em Diffusion, quantum theory, and radically elementary
  mathematics}, volume~47 of {\em Math. Notes}, pages 45--73. Princeton Univ.
  Press, Princeton, NJ, 2006.

\bibitem[GRS75]{GRS75}
F.~Guerra, L.~Rosen, and B.~Simon.
\newblock The {${\bf P}(\phi)_{2}$} {E}uclidean quantum field theory as
  classical statistical mechanics. {I}, {II}.
\newblock {\em Ann. of Math. (2)}, 101:111--189; ibid. (2) 101\ (1975),
  191--259, 1975.

\bibitem[Leh13]{Lehec13}
Joseph Lehec.
\newblock Representation formula for the entropy and functional inequalities.
\newblock {\em Ann. Inst. Henri Poincar\'e Probab. Stat.}, 49(3):885--899,
  2013.

\bibitem[LS11]{LS}
R.S. Liptser and A.N. Shiryayev.
\newblock {\em Statistics of random processes.}
\newblock Springer-Velrag, Berlin, second edition, 2011.
\newblock translated from Russian by A. B. Aries.

\bibitem[MOA11]{MOA}
Albert~W. Marshall, Ingram Olkin, and Barry~C. Arnold.
\newblock {\em Inequalities: theory of majorization and its applications}.
\newblock Springer Series in Statistics. Springer, New York, second edition,
  2011.

\bibitem[Nel73]{Nelson73}
Edward Nelson.
\newblock The free {M}arkoff field.
\newblock {\em J. Functional Analysis}, 12:211--227, 1973.

\bibitem[O'D14]{OD14}
Ryan O'Donnell.
\newblock {\em Analysis of Boolean Functions}.
\newblock Cambridge University Press, 2014.

\bibitem[{\O}ks03]{OxBook}
Bernt {\O}ksendal.
\newblock {\em Stochastic differential equations}.
\newblock Universitext. Springer-Verlag, Berlin, sixth edition, 2003.
\newblock An introduction with applications.

\bibitem[RY99]{RY}
D.~Revuz and M.~Yor.
\newblock {\em Continuous martingales and Brownian motion.}, volume 293 of {\em
  Grundlehren der mathematischen Wissenschaften}.
\newblock Springer, Berlin, third edition, 1999.

\bibitem[Tal89]{T89}
Michel Talagrand.
\newblock A conjecture on convolution operators, and a non-{D}unford-{P}ettis
  operator on {$L^1$}.
\newblock {\em Israel J. Math.}, 68(1):82--88, 1989.

\bibitem[Vil03]{Villani03book}
C{\'e}dric Villani.
\newblock {\em Topics in optimal transportation}, volume~58 of {\em Graduate
  Studies in Mathematics}.
\newblock American Mathematical Society, Providence, RI, 2003.

\end{thebibliography}

\appendix

\section{Proof of Lemma \ref{lem:hessian}}

\begin{proof}
The proof is a simple application of the fact that a mixture of log-convex densities is log-convex (see, e.g., \cite[p.649]{MOA}).
Observe that for any $y \in \RR^n$, the function
$$
x \to \frac{|x|^2}{2 t} + \log \left (\cP_{t} (\delta_y) (x) \right )
$$
is convex (here, $\delta_y$ denotes a Dirac mass supported on $\{y\}$). We now apply the
aforementioned fact to conclude that for any integrable function $g:\RR^n \to [0, \infty)$, the function
$$
x \to \frac{|x|^2}{2 t} + \log  \left ( \int_{\RR^n} g(y) \bigl  ( \cP_{t} (\delta_y) (x) \bigr ) dy \right )
$$
must also be convex. In other words, the function
$$
x \to \frac{|x|^2}{2 t} + \log \cP_{t}(g)
$$
is convex. We conclude that
$$
\nabla^2 \log \cP_{t}(g) \succeq - \nabla^2 \left ( \frac{|x|^2}{2 t} \right ) =  -\frac{1}{t} \mathrm{Id}.
$$
\end{proof}

\end{document}